\newcommand{\A}{\mathcal{A}}
\renewcommand{\H}{\mathcal{H}}
\def\namedlabel#1#2{\begingroup
    #2%
    \def\@currentlabel{#2}%
    \phantomsection\label{#1}\endgroup}
\newenvironment{claimproof}[2]{\par\noindent\textit{Proof of Claim} #1:\space#2}{\hfill $\square$}
\journalname{}
\begin{document}

\title{Moreau-Yosida Regularization of Degenerate State-Dependent Sweeping Processes}
\titlerunning{Moreau-Yosida Regularization of Degenerate State-dependent Sweeping Processes}
\subtitle{}

\author{Diana Narv\'aez \and  Emilio Vilches}

\institute{Diana Narv\'aez \at
             Departamento de Ingenier\'ia Matem\'atica\\ 
           Universidad de Chile\\
           Santiago, Chile\\
           \email{dnarvaez@dim.uchile.cl}
           \and
					 Emilio Vilches, Corresponding author \at
           Instituto de Ciencias de la Ingenier\'ia\\ 
           Universidad de O'Higgins\\
           Rancagua, Chile\\
           \email{emilio.vilches@uoh.cl}
}
\date{Received: date / Accepted: date}

\maketitle

\begin{abstract}
In this paper, we introduce and study  degenerate state-dependent sweeping processes with nonregular moving sets (subsmooth and positively $\alpha$-far). Based on the Moreau-Yosida regularization, we prove the existence of solutions under the Lipschitzianity of the moving sets with respect to the truncated Hausdorff distance. 
\end{abstract}
\keywords{Moreau-Yosida Regularization $\cdot$ Subsmooth Sets $\cdot$ Degenerate Sweeping Process $\cdot$ Positively $\alpha$-far Sets $\cdot$ Normal Cone 
}
\subclass{34A60 $\cdot$ 49J52 $\cdot$  49J53}



\section{Introduction}

The sweeping process is a first-order  dynamical system involving normal cones to moving sets. It was introduced and deeply studied by J. J. Moreau as a model for an elastoplastic mechanical system (see, e.g.,  \cite{MO1,MO2,MO4,Moreau1999}). Since then, and due to its versatility as a mathematical model, it has been used in different applications such as electrical circuits  \cite{Acary-Bon-Bro-2011}, contact mechanics \cite{Brogliato-M}, crowd motion \cite{Maury-Venel} , hysteresis phenomena \cite{Krejci-1997}, soft crawlers \cite{Gidoni1998}, among others.  Furthermore, the sweeping process has motivated the use of differential inclusions with normal cones, which has allowed the study of new variants of the sweeping process. Among them, we can mention the state-dependent sweeping process \cite{JV-Moreau}, the second-order sweeping process \cite{Adly2016,MM2019}, the implicit sweeping process \cite{JV-Implicit,JZ-2020}, the generalized sweeping process  \cite{JV-alpha,JV-Galerkin2017}, and the degenerate sweeping process \cite{Kunze-MM1997}.  All of them are motivated by practical applications, and where there are still several open problems both from the practical and mathematical point of view. We refer to \cite{Brogliato-Tanwani} for a recent survey on the subject.

In this article, we are concerned with the study of state-dependent sweeping processes and  degenerate sweeping processes.  

On the one hand, the state-dependent sweeping process corresponds to the case where the moving set also depends on the state. This differential inclusion has been motivated by quasi-variational inequalities arising, e.g., in  quasistatic evolution problems with friction, the evolution of sandpiles and micromechanical damage models for iron materials, among others (see \cite{Kunze-MM2000} and the references therein).  We refer to \cite{JV-Moreau} for a review on the subject.

On the other hand, the degenerate sweeping process corresponds to the case where a linear/nonlinear operator is added ``inside'' the sweeping process.  This dynamics was proposed by Kunze and Monteiro-Marques  as a model for quasistatic elastoplasticity (see \cite{Kunze-MM1997}). Since then,  the degenerate sweeping process has been studied by several authors in the context of convex and prox-regular sets (see \cite{Kunze-MM1997,Kunze1998,Kunze1999,Adly2020,Haddad-Sene2019}). 

Regarding the moving sets, both the state-dependent sweeping process and the degenerate sweeping process have been studied by considering continuity properties with respect to the Hausdorff distance. However, it has been clear that the Hausdorff distance is quite restrictive and thus limits the spectrum of possible applications (see, e.g., \cite{Thibault2016,NacryT2019}). We provide an example of moving sets where the Lipschitzianity of the moving sets with respect to the Hausdorff distance is not satisfied, but the weaker notion of Lipschitzianity with respect to the truncated Hausdorff distance holds. 

In this paper, we consider both processes with nonregular moving sets satisfying a notion of Lipschitzianity with respect to the truncated Hausdorff distance, which generalizes all the known results in the literature. 
Another novelty of this work is that we introduced and study the degenerate state-dependent sweeping process with nonregular sets (subsmooth and positively $\alpha$-far), which is also new in the literature.

This paper aims to 1) to unify the state-dependent sweeping process with the degenerate sweeping process. 2) to relax the Lipschitzianity of the moving sets with respect to the Hausdorff distance by the weaker notion of  Lipschitzianity with respect to the truncated Hausdorff distance. 3) to study the  degenerate state-dependent sweeping process with nonregular moving sets (subsmooth and positively $\alpha$-far).  

Our results are obtained through the Moreau-Yosida regularization technique. The Moreau-Yosida regularization technique consists of approaching a given differential inclusion by a penalized one, depending on a parameter,  whose existence is easier to obtain (for example, by using the classical Cauchy-Lipschitz Theorem), and then to pass to the limit as the parameter goes to zero. This technique has been used several time to deal with sweeping processes (see \cite{MO1,Marques87,Marques1993,Kunze1996,Thibault2008,Mazade2013,Sene2014} for more details). Recently, it was extended by Jourani and Vilches \cite{JV-Moreau} to deal with state-dependent sweeping processes with Lipschitz nonregular moving sets with respect to the Hausdorff distance.   We adapt and extend the techniques from \cite{JV-Moreau} to deal with the degenerate state-dependent sweeping process driven by Lipschitz nonregular moving sets with respect to the truncated Hausdorff distance. 

The paper is organized as follows. After some preliminaries,  in Section \ref{hipo-sol}, we list the hypotheses used throughout the paper.  In Section \ref{preparatory},  we establish some technical results used to prove the main result of the paper. Next, in Section \ref{Existence}, we present  the main result of the paper  (Theorem \ref{Lipschitz-state}), namely, the convergence (up to a subsequence) of the Moreau-Yosida regularization for the degenerate state-dependent sweeping process under the Lipschitzianity of the moving sets with respect to the truncated Hausdorff distance.  The paper ends with final comments and conclusions.

\section{Preliminaries}

In this paper, $(\H,\langle \cdot,\cdot\rangle)$ stands for a separable Hilbert with norm  $\Vert \cdot \Vert$. We denote by $\mathbb{B}(x,\rho)$ the closed ball centered at $x$ with radius $\rho$.  The closed unit ball is denoted by $\mathbb{B}$. We write $\H_w$ for the space $\H$ endowed with the weak topology and $x_n \rightharpoonup x$ stands for the weak convergence in $\H$ of a sequence $(x_n)_n$ to $x$.

A vector $h\in \H$ belongs to the Clarke tangent cone $T_C(S;x)$, when for every sequence $(x_n)_n$ in $S$ converging to $x$ and every sequence of positive numbers $(t_n)_n$ converging to $0$, there exists some sequence $(h_n)_n$ in $\H$ converging to $h$ such that $x_n+t_nh_n\in S$ for all $n\in \mathbb{N}$. This cone is closed and convex and its negative polar is the Clarke normal cone to $S$ at $x\in S$, that is,
$N\left(S;x\right)=\left\{v\in \H\colon \left\langle v,h\right\rangle \leq 0 \textrm{ for all }  h\in T_C(S;x)\right\}$.
As usual, $N(S;x)=\emptyset$ if $x\notin S$. Through the Clarke normal cone, we define the Clarke subdifferential of a function $f\colon \H\to \mathbb{R}\cup\{+\infty\}$ as
\begin{equation*}
\partial f(x):=\left\{v\in \H\colon (v,-1)\in N\left(\operatorname{epi}f,(x,f(x))\right)\right\},
\end{equation*}
where $\operatorname{epi}f:=\left\{(y,r)\in \H\times \mathbb{R}\colon f(y)\leq r\right\}$ is the epigraph of $f$. When the function $f$ is finite and locally Lipschitzian around $x$, the Clarke subdifferential is characterized (see \cite[Proposition~2.1.5]{Clarke1998}) in the following way
\begin{equation*}
\partial f(x)=\left\{v\in \H\colon \left\langle v,h\right\rangle \leq f^{\circ}(x;h) \textrm{ for all } h\in \H\right\},
\end{equation*}
where
\begin{equation*}
f^{\circ}(x;h):=\limsup_{(t,y)\to (0^+,x)}t^{-1}\left[f(y+th)-f(y)\right],
\end{equation*}
is the \emph{generalized directional derivative} of the locally Lipschitzian function $f$ at $x$ in the direction $h\in \H$.  The function $f^{\circ}(x;\cdot)$ is in fact the support of $\partial f(x)$, that is, $f^{\circ}(x;h)=\sup\{\langle v,h\rangle\colon v\in \partial f(x)\}$. This characterization easily yields that the Clarke subdifferential of any locally Lipschitzian function has the important property of \emph{  upper semicontinuity from $\H$ into $\H_w$}.

For $S\subset \H$, the distance function is defined by $d_{S}(x):=\inf_{y\in S}\Vert x-y\Vert$ for $x\in \H$. We denote $\operatorname{Proj}_{S}(x)$ as the set (possibly empty) of points which attain this infimum.  The equality (see \cite[Proposition~2.5.4]{Clarke1998})
\begin{equation}\label{cono-distance}
\begin{aligned}
N\left(S;x\right)&=\operatorname{cl}^{w}\left({\mathbb{R}_+\partial d_S(x)}\right) & \textrm{ for } x\in S,
\end{aligned}
\end{equation}
gives an expression of the Clarke normal cone in terms of the distance function. As usual, it will be convenient to write $\partial d(x,S)$ in place of $\partial d\left(\cdot,S\right)(x)$.
\begin{remark}
In the present paper,  we will calculate the Clarke subdifferential of the distance function to a moving set. By doing so, the subdifferential will always be calculated with respect to the variable involved in the distance function by assuming that the set is fixed. More explicitly, $\partial d_{C(t,y)}(x)$ means the subdifferential of the function $d_{C(t,y)}(\cdot{})$ (here $C(t,y)$ is fixed) is calculated at the point $x$, i.e., $\partial \left(d_{C(t,y)}(\cdot)\right)(x)$. In the same way, $\partial d_{C(t,x)}(x)$ means the subdifferential of the function $d_{C(t,x)}(\cdot)$ (here $C(t,x)$ is a fixed set) is calculated at the point $x$, i.e., $\partial \left(d_{C(t,x)}(\cdot)\right)(x)$.
\end{remark}

Let $f\colon \H\to \mathbb{R}\cup\{+\infty\}$ be a proper and lsc function and $x\in \operatorname{dom}f$. An element $\zeta$ belongs to the proximal subdifferential (see \cite[Chapter~1]{Clarke1998}) $\partial_P f(x)$ of $f$ at $x$  if there exist two positive numbers $\sigma$ and $\eta$ such that
\begin{equation*}
\begin{aligned}
f(y)&\geq f(x)+\left\langle \zeta,y-x\right\rangle -\sigma\Vert y-x\Vert^2 & \forall y\in B(x;\eta).
\end{aligned}
\end{equation*}
For a closed set $S\subset \H$, the \emph{proximal normal cone} of $S$ at $x\in S$ is defined as $N^P(S;x)=\partial_P I_S(x)$, where $I_S$ is the indicator function of the set $S$.

\noindent Now, we recall the definition of the class of positively $\alpha$-far sets, introduced in \cite{Haddad2009} and widely studied in \cite{JV-alpha}.
\begin{definition} Let $\alpha\in ]0,1]$ and $\rho\in ]0,+\infty]$. Let $S$ be a nonempty closed subset of $\H$ with $S\neq \H$. We say that the Clarke subdifferential of the \,distance function $d(\cdot,S)$ keeps the origin $\alpha$-far-off on the open $\rho$-tube around $S$, $U_{\rho}(S):=\{x\in \H\colon 0<d(x,S)<\rho\}$, provided
\begin{equation}\label{13}
0<\alpha\leq \inf_{x\in U_{\rho}(S)}d(0,\partial d(\cdot,S)(x)).
\end{equation}
Moreover, if $E$ is a given nonempty set, we say that the family $(S(t))_{t\in E}$ is positively $\alpha$-far if every $S(t)$ satisfies (\ref{13}) with the same $\alpha\in ]0,1]$ and  $\rho>0$.
\end{definition}
Several characterizations of this notion and examples were given in \cite{JV-alpha}. We refer to \cite{JV-alpha} for more details. 

The notion of positively $\alpha$-far sets includes strictly the notion of uniformly subsmooth sets (see Proposition \ref{alpha-far}) and the notion of uniformly prox-regular sets (see \cite{JV-alpha}).
\begin{definition}[\cite{Poliquin2000}]
  For a fixed $r>0$, the set $S$ is said to be $r$-uniformly prox-regular if for any $x\in S$ and $\zeta\in N_S^P(x)\cap \mathbb{B}$  one has $x=\operatorname{proj}_S(x+r\zeta)$.
\end{definition}
 It is known that $S$ is $r$-uniformly prox-regular if and only if every nonzero proximal vector $\zeta\in N_S^P(x)$ to $S$ at any point $x\in S$ can be realized by an $r$-ball, that is, $S\cap B\left(x+r\frac{\zeta}{\Vert \zeta\Vert}\right)=\emptyset$, which is equivalent to
\begin{equation*}
\begin{aligned}
\left\langle \zeta,y-x\right\rangle&\leq \frac{\Vert \zeta\Vert}{2r}\Vert y-x\Vert^2 & \textrm{ for all } y\in S.
\end{aligned}
\end{equation*}

\begin{definition}Let $S$ be a closed subset of $\H$. We say that $S$ is \emph{uniformly subsmooth}, if for every $\varepsilon>0$ there exists $\delta>0$, such that
\begin{equation}\label{14}
  \left\langle x_1^*-x_2^*,x_1-x_2\right\rangle \geq -\varepsilon \Vert x_1-x_2\Vert
\end{equation}
holds for all $x_1,x_2\in S$ satisfying $\Vert x_1-x_2\Vert <\delta$ and all $x_i^*\in N\left(S;x_i\right)\cap \mathbb{B}$ for $i=1,2$. Also, if $E$ is a given nonempty set, we say that the family $\left(S(t)\right)_{t\in E}$ is equi-uniformly subsmooth, if for every $\varepsilon>0$, there exists $\delta>0$ such that (\ref{14}) holds for each $t\in E$ and all $x_1,x_2\in S(t)$ satisfying $\Vert x_1-x_2\Vert <\delta$ and all $x_i^*\in N\left(S(t);x_i\right)\cap \mathbb{B}$ for $i=1,2$.
\end{definition}
\begin{proposition}[\cite{JV-alpha}]\label{alpha-far}
 Assume that $S$ is uniformly subsmooth. Then, for all $\varepsilon\in ]0,1[$ there exists $\rho\in ]0,+\infty[$ such that 
\begin{equation*}
\sqrt{1-\varepsilon}\leq \inf_{y\in U_{\rho}(S)}d(0,\partial d(y,S)).
\end{equation*}
\end{proposition}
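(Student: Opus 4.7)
\noindent\textit{Proof plan.} The approach is to decompose any Clarke subgradient of $d(\cdot,S)$ at a point $y$ close to $S$ into proximal subgradients at nearby points, show that each such proximal subgradient is a unit vector aligned with a near-projection onto $S$, and then use uniform subsmoothness to show that any two such unit vectors are nearly parallel. Convex combinations and weak closure will then preserve the resulting norm estimate.

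First I fix $\varepsilon\in(0,1)$ and invoke uniform subsmoothness to get $\delta>0$ for which (\ref{14}) holds; set $\rho := \delta/4$. For $y'\in U_\rho(S)$ and $\zeta\in\partial_P d(\cdot,S)(y')$, by inserting $z = y' + t(s_n'-y')/\|s_n'-y'\|$ into the proximal subgradient inequality, where $(s_n')_n\subset S$ is a minimizing sequence for $d(y',S)$, and letting $t\to 0^+$ along a diagonal, I obtain $\|\zeta\|=1$ and $\zeta = (y'-s^*)/d(y',S)$ with $s^*:=y'-d(y',S)\zeta\in\operatorname{Proj}_S(y')$. Since $y'-s^*\in N^P(S;s^*)$ by the projection characterization of proximal normals, this gives $\zeta\in N(S;s^*)\cap\mathbb{B}$.

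The core estimate concerns two proximal subgradients $\zeta_i\in \partial_P d(\cdot,S)(y_i)$ at points $y_i$ close to $y$, with projections $s_i^* = y_i - d(y_i,S)\zeta_i$. Since $\|s_1^*-s_2^*\|\le \|y_1-y_2\|+d(y_1,S)+d(y_2,S)<\delta$ for $y_1,y_2$ close to $y$ and $\rho<\delta/4$, inequality (\ref{14}) applies to the pair $\zeta_i\in N(S;s_i^*)\cap\mathbb{B}$. Expanding $\langle \zeta_1-\zeta_2, s_1^*-s_2^*\rangle$ using $s_i^* = y_i-d(y_i,S)\zeta_i$ and $\|\zeta_i\|=1$, uniform subsmoothness rearranges into
\begin{equation*}
(d(y_1,S)+d(y_2,S))(1-\langle \zeta_1,\zeta_2\rangle) \le \langle \zeta_1-\zeta_2, y_1-y_2\rangle + \varepsilon\,\|s_1^*-s_2^*\|.
\end{equation*}
Dividing by $d(y_1,S)+d(y_2,S)\to 2d(y,S)>0$ yields $\langle \zeta_1,\zeta_2\rangle\ge 1-\varepsilon - o(1)$ as $y_1,y_2\to y$.

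Finally, since $d(\cdot,S)$ is $1$-Lipschitz, any $v\in\partial d(\cdot,S)(y)$ is a weak limit of convex combinations $v_k=\sum_i\lambda_i^k\zeta_i^k$ of proximal subgradients $\zeta_i^k\in\partial_P d(\cdot,S)(y_i^k)$ at base points $y_i^k\in U_\rho(S)$ converging to $y$. The pairwise estimate gives $\|v_k\|^2 = \sum_{i,j}\lambda_i^k\lambda_j^k\langle \zeta_i^k,\zeta_j^k\rangle \ge 1-\varepsilon - o(1)$, so by weak lower semicontinuity of the norm we conclude $\|v\|\ge \sqrt{1-\varepsilon}$, and taking the infimum over $y\in U_\rho(S)$ and over $v\in\partial d(\cdot, S)(y)$ yields the claim. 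The main difficulty I anticipate is the bookkeeping in this last step: one must control the $o(1)$ error uniformly across all base points and all indices in the convex combination, which is most cleanly handled by running the whole argument with some $\varepsilon'<\varepsilon$ at the outset so that the remainder is absorbed into the final bound $\sqrt{1-\varepsilon}$.
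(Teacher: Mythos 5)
The paper does not actually prove this proposition; it is quoted from \cite{JV-alpha}, so your attempt can only be judged on its own terms. Your architecture is the standard one and is sound up to the very last step: the identification of a proximal subgradient of $d(\cdot,S)$ at $y'\notin S$ as the unit vector $(y'-s^*)/d(y',S)$ with $s^*\in\operatorname{Proj}_S(y')$ is correct, the inclusion $\zeta\in N(S;s^*)\cap\mathbb{B}$ is correct, and the identity $\langle\zeta_1-\zeta_2,s_1^*-s_2^*\rangle=\langle\zeta_1-\zeta_2,y_1-y_2\rangle-(d(y_1,S)+d(y_2,S))(1-\langle\zeta_1,\zeta_2\rangle)$ combined with (\ref{14}) does give $\langle\zeta_1,\zeta_2\rangle\ge 1-\varepsilon-o(1)$.

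The gap is in the final sentence. Weak lower semicontinuity of the norm says $\Vert v\Vert\le\liminf_k\Vert v_k\Vert$ whenever $v_k\rightharpoonup v$; that inequality points in the useless direction here, and in an infinite-dimensional $\H$ the norm genuinely can collapse under weak limits (an orthonormal sequence converges weakly to $0$ while each term has norm $1$). Since the representation of $\partial d(\cdot,S)(y)$ involves the \emph{weak closed} convex hull of limiting proximal subgradients, not merely finite convex combinations, your bound $\Vert v_k\Vert^2\ge 1-\varepsilon-o(1)$ does not transfer to the weak limit $v$. The repair is short and uses exactly the estimate you already proved, but linearly rather than quadratically: fix a single $\zeta_0\in\partial_P d(\cdot,S)(y_0)$ with $y_0$ sufficiently close to $y$ (such points are dense by the proximal density theorem, and $\Vert\zeta_0\Vert=1$). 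Your pairwise estimate shows that every proximal subgradient $\zeta$ based at a point near $y$ satisfies $\langle\zeta,\zeta_0\rangle\ge 1-\varepsilon-o(1)$, i.e., all of them lie in the half-space $H:=\{w\in\H\colon\langle w,\zeta_0\rangle\ge 1-\varepsilon-o(1)\}$, which is convex and weakly closed; hence $\partial d(\cdot,S)(y)\subset H$, and by Cauchy--Schwarz every $v\in\partial d(\cdot,S)(y)$ satisfies $\Vert v\Vert\ge\langle v,\zeta_0\rangle\ge 1-\varepsilon-o(1)$. Running the whole argument with $\varepsilon$ replaced by $1-\sqrt{1-\varepsilon}$, as you already anticipate at the end, then yields the stated bound $\sqrt{1-\varepsilon}$.
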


\begin{remark}The class of positively $\alpha$-far sets contains strictly that of uniformly subsmooth sets. To see this, consider $S=\{(x,y)\in \mathbb{R}^2\colon y\geq  -|x|\}$. Then, (see \cite{Haddad2009}), $S$ satisfies relation (\ref{13}) with  $\alpha=\frac{\sqrt{2}}{2}$ on $\H\setminus S$, but we easily see that $S$ is not uniformly subsmooth.
\end{remark}

Let $A$ be a bounded subset of $\H$. We define the Kuratowski measure of non-compactness of $A$, $\alpha(A)$, as
\begin{equation*}
\alpha(A)=\inf\{d>0\colon A \textrm{ admits a finite cover by sets of diameter }\leq d\},
\end{equation*}
and the Hausdorff measure of non-compactness of $A$, $\beta(A)$, as
\begin{equation*}
\beta(A)=\inf\{r>0\colon A \textrm{ can be covered by finitely many balls of radius } r\}.
\end{equation*}
The following result gives the main properties of the Kuratowski  and Hausdorff measure of non-compactness (see \cite[Proposition~9.1 from  Section~9.2]{Deimling1992}).
\begin{proposition}\label{Kura}
  Let $\H$ be a Hilbert space and $B,B_1,B_2$ be bounded subsets of $\H$. Let $\gamma$ be the Kuratowski or the Hausdorff measure of non-compactness. Then,
  \begin{enumerate}[label=(\roman{*})]
    \item $\gamma(B)=0$ if and only if $\operatorname{cl}(B)$ is compact.
    \item $\gamma(\lambda B)=|\lambda|\gamma(B)$ for every $\lambda\in \mathbb{R}$.
    \item  $\gamma(B_1+B_2)\leq \gamma(B_1)+\gamma(B_2)$.
    \item $B_1\subset B_2$ implies $\gamma(B_1)\leq \gamma(B_2)$.
    \item $\gamma(\operatorname{conv}B)=\gamma(B)$.
    \item $\gamma(\operatorname{cl}(B))=\gamma(B)$.
    \item  If $\A \colon \H \to \H$ is a Lipschitz map of constant $M\geq 0$, then $$\gamma(\A(B))\leq M\gamma(B).$$
  \end{enumerate}
\end{proposition}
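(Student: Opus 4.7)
The strategy is to verify the seven listed properties for both the Kuratowski measure $\alpha$ and the Hausdorff measure $\beta$ in parallel, since the arguments differ only by the substitution ``diameter $d$ $\leftrightarrow$ radius $r$''. I would dispatch (i), (ii), (iii), (iv), (vi), (vii) first, as each is essentially one-line bookkeeping from the defining cover formulas, leaving (v) for last as the only substantive step.

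For (i) I would invoke the classical fact that a subset of the complete space $\H$ has compact closure if and only if it is totally bounded: this gives $\beta(B)=0 \iff \operatorname{cl}(B)$ is compact directly, and for $\alpha$ one uses that a set of diameter $\leq d$ lies in a ball of radius $d$ while a ball of radius $r$ has diameter $2r$. Property (ii) follows because scaling a cover by $\lambda$ multiplies diameters and radii by $|\lambda|$; property (iv) because any cover of $B_2$ restricts to a cover of $B_1\subset B_2$; property (vi) because closing the members of a cover does not increase diameter (and only infinitesimally increases radii if needed); property (vii) because a Lipschitz map with constant $M$ sends a set of diameter/radius $d$ to one of diameter/radius $\leq Md$. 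For (iii), given covers $\{U_i\}$ of $B_1$ and $\{V_j\}$ of $B_2$, the family $\{U_i+V_j\}$ covers $B_1+B_2$ with diameters (respectively radii) bounded by the sum, and one takes infima.

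The main step is (v), where only $\gamma(\operatorname{conv} B)\leq \gamma(B)$ is nontrivial (the reverse is (iv)). Given a cover of $B$ by sets $K_1,\dots,K_n$ of diameter $\leq d$, I would first replace each $K_i$ by $\operatorname{conv}(K_i)$, which in a normed space has the same diameter by the bilinear estimate $\|\sum_i\lambda_i a_i-\sum_j\mu_j b_j\|\leq \sum_{i,j}\lambda_i\mu_j\|a_i-b_j\|\leq \operatorname{diam}(K_i)$. Then I would exploit the identity
\begin{equation*}
\operatorname{conv}\Bigl(\bigcup_{i=1}^n K_i\Bigr)=\Bigl\{\sum_{i=1}^n\lambda_i x_i : \lambda\in\Delta_n,\ x_i\in K_i\Bigr\},
\end{equation*}
where $\Delta_n$ is the standard simplex. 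Since $\Delta_n$ is compact, I would fix a finite $\eta$-net $\{\lambda^{(k)}\}_{k=1}^N\subset\Delta_n$ and define $L_k:=\{\sum_i\lambda_i^{(k)}x_i : x_i\in K_i\}$; a direct Minkowski-style estimate gives $\operatorname{diam}(L_k)\leq \max_i\operatorname{diam}(K_i)\leq d$. Using boundedness of $B$ by some $M_0$, every point of $\operatorname{conv}(\bigcup K_i)$ lies within $M_0\eta$ of some $L_k$, so $\{L_k+M_0\eta\mathbb{B}\}_{k=1}^N$ covers $\operatorname{conv}(B)$ with diameters at most $d+2M_0\eta$. Letting $\eta\to 0$ and infimizing over the original cover yields the claim.

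The main obstacle is precisely this simplex-net argument in (v): tracking the inflation constants carefully and choosing $\eta$ small enough to absorb them is the only non-routine bookkeeping. Everything else reduces to rewriting the definitions.
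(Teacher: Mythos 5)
Your proof is correct, but note that the paper itself does not prove this proposition: it simply cites Deimling (Proposition 9.1, Section 9.2), and your argument is essentially the standard textbook proof given there, so there is no methodological divergence to speak of. The only places where a touch of care is needed are in (v): in the identity for $\operatorname{conv}\bigl(\bigcup_i K_i\bigr)$ you should first discard empty $K_i$ and replace each $K_i$ by $K_i\cap B$ (which does not increase diameters and keeps the cover property), so that the bound $\|x_i\|\leq M_0$ used in the $\eta$-net estimate actually follows from the boundedness of $B$; with that adjustment the inflation bookkeeping $d+2M_0\eta\to d$ as $\eta\to 0$ goes through exactly as you describe, and the parallel Hausdorff-measure version works by replacing ``diameter of $L_k$'' with the observation that $\sum_i\lambda_i^{(k)}\mathbb{B}(c_i,r)\subset\mathbb{B}\bigl(\sum_i\lambda_i^{(k)}c_i,r\bigr)$.
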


The following result (see \cite[Theorem~2]{Bothe1998}) is used to prove the existence of the Moreau-Yosida regularization scheme.
\begin{theorem}\label{bothe-2}
 Let $I=[0,T]$ for some $T>0$ and $F\colon I\times \H \rightrightarrows \H$ be with a set-valued map with nonempty closed and convex values satisfying:
\begin{enumerate}[label=(\roman{*})]
\item for every $x\in \H$, $F(\cdot,x)$ is measurable.
\item for every $t\in I$, $F(t,\cdot)$ is upper semicontinuous from $\H$ into $\H_w$.
\item for a.e. $t\in I$ and $A\subset \H$ bounded, $\gamma(F(t,A))\leq k(t)\gamma(A)$, 
for some $k\in L^1(0,T)$ with $k(t)<+\infty$ for all $t\in I$, where $\gamma=\alpha$ or $\gamma=\beta$ is either the Kuratowski or the Hausdorff measure of non-compactness.
\end{enumerate}
Then, the differential inclusion
\begin{equation*}
\left\{
\begin{aligned}
\dot{x}(t)&\in F(t,x(t)) & \textrm{ a.e. } t\in I,\\
x(0)&=x_0,
\end{aligned}
\right.
\end{equation*}
has at least one solution $x\in \operatorname{AC}\left(I;\H\right)$.
\end{theorem}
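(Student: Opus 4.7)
The plan is to prove this via a classical Euler-type discretization combined with a Gronwall estimate on the measure of non-compactness. For each $n\in\mathbb{N}$, I would partition $I=[0,T]$ with nodes $t^n_i=iT/n$ and on each slab $[t^n_i,t^n_{i+1}]$ choose $\dot{x}_n(t)$ as a measurable selection of $F(t,x_n(t^n_i))$; such a selection exists thanks to the measurability in (i), the closed convex values of $F$, and the Kuratowski--Ryll-Nardzewski theorem, with $x_n(0)=x_0$. Under the integrable-growth bound customarily associated with this type of statement, a Gronwall argument yields a uniform bound on $\|x_n\|_\infty$ together with an $L^1$-bound on $\dot{x}_n$, so that $(x_n)$ is equicontinuous and uniformly bounded in $C(I;\H)$.

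Second, I would upgrade boundedness to pointwise relative compactness through hypothesis (iii). Define $\varphi(t):=\gamma(\{x_n(t):n\in\mathbb{N}\})$, which satisfies $\varphi(0)=0$. Writing $x_n(t)=x_0+\int_0^t \dot{x}_n(s)\,ds$, invoking the properties of $\gamma$ from Proposition \ref{Kura}, and applying the Ambrosetti-type integral inequality
\[
\gamma\Big(\Big\{\int_0^t \dot{x}_n(s)\,ds:n\in\mathbb{N}\Big\}\Big)\leq \int_0^t \gamma(\{\dot{x}_n(s):n\in\mathbb{N}\})\,ds,
\]
together with the bound $\gamma(\{\dot{x}_n(s):n\})\leq k(s)\,\gamma(\{x_n(\tau_n(s)):n\})$ coming from (iii) (where $\tau_n(s)$ denotes the nearest lower grid point), equicontinuity allows me to replace $x_n(\tau_n(s))$ by $x_n(s)$ in the limit. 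This produces
\[
\varphi(t)\leq \int_0^t k(s)\varphi(s)\,ds,
\]
and Gronwall's lemma forces $\varphi\equiv 0$. Combined with equicontinuity, Ascoli--Arzel\`a delivers a subsequence (not relabeled) with $x_n\to x$ uniformly on $I$.

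Finally, I would identify $x$ as a solution by passing to the limit in $\dot{x}_n(t)\in F(t,x_n(\tau_n(t)))$. The uniform integrability of $\dot{x}_n$ together with Dunford-Pettis supplies, along a further subsequence, $\dot{x}_n\rightharpoonup \dot{x}$ weakly in $L^1(I;\H)$; in particular $x$ is absolutely continuous with derivative $\dot{x}$. Mazur's lemma provides convex combinations converging strongly, and combining this with the upper semicontinuity of $F(t,\cdot)$ from $\H$ into $\H_w$ (ii), the uniform convergence $x_n(\tau_n(t))\to x(t)$, and the convexity and weak closedness of $F(t,x(t))$, I conclude $\dot{x}(t)\in F(t,x(t))$ for a.e.\ $t\in I$. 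The main obstacle is the infinite-dimensional measure-of-non-compactness calculus: passing $\gamma$ inside the integral is delicate and relies on a simple-function approximation of the multimap $s\mapsto\{\dot{x}_n(s)\}_n$, while the identification step must carefully exploit convexity of the values together with the weak-to-strong passage via Mazur. Once these technical points are absorbed, the remaining steps follow standard patterns from the theory of differential inclusions.
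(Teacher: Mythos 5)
A preliminary remark: the paper does not prove this statement at all --- it is quoted from Bothe (Theorem~2 of \cite{Bothe1998}) and used as a black box, so there is no in-paper argument to compare yours against. Your sketch is nonetheless the standard route to results of this type (Euler polygons with measurable selections via Kuratowski--Ryll-Nardzewski, the Heinz--M\"onch integral inequality for the measure of non-compactness, Gr\"onwall, Ascoli--Arzel\`a, and the Aubin--Cellina Convergence Theorem), and it is essentially how Bothe and Deimling argue. Each step you describe is sound in the separable Hilbert setting of the paper: separability is what makes the inequality $\gamma\bigl(\bigl\{\int_0^t \dot{x}_n\bigr\}\bigr)\leq\int_0^t\gamma(\{\dot{x}_n(s)\})\,ds$ available without a factor of $2$, and since finite sets do not affect $\gamma$, passing to tails of the sequence legitimizes the replacement of $x_n(\tau_n(s))$ by $x_n(s)$ using equicontinuity.

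The one point you should not treat as folklore is the growth condition. As transcribed in the paper, the theorem carries no bound on $\Vert F(t,x)\Vert$, and without one the statement is false as written (take $\H=\mathbb{R}$ and $F(t,x)=\{x^{2}\}$, whose solutions blow up before time $T$); Bothe's original hypotheses include a linear growth bound $\Vert F(t,x)\Vert\leq c(t)(1+\Vert x\Vert)$ with $c\in L^{1}$, and it is precisely this bound that the paper verifies in item (v) of Proposition~\ref{distance-prop-2} before invoking the theorem. Your proof genuinely needs it in several places: to make the slab-wise measurable selections integrable, to run the Gr\"onwall estimate giving the $L^{\infty}$- and $L^{1}$-bounds and hence equicontinuity, to obtain the uniform integrability required both for Dunford--Pettis and for the integral inequality for $\gamma$, and to ensure that the values $F(t,x(t))$ are bounded closed convex, hence weakly compact, so that upper semicontinuity into $\H_{w}$ can be combined with Mazur in the identification step. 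So you are right to insert it, but it should be stated as a missing hypothesis of the theorem rather than as something ``customarily associated'' with it. With that proviso, your outline is correct and coincides with the proof in the cited source.
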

The following result provides a compactness criterion for absolutely continuous functions (see \cite[Lemma~2.2]{JV-Moreau}).
\begin{lemma}\label{compactness}
 Let $(x_n)_n$ be a sequence of absolutely continuous functions from $I$ into $\H$ with $x_n(0)=x_0^n$.  Assume that for all $n\in \mathbb{N}$
\begin{equation}\label{acotamiento}
\begin{aligned}
\Vert \dot{x}_n(t)\Vert &\leq \psi(t) & \textrm{ a.e } t\in I,
\end{aligned}
\end{equation}
where $\psi\in L^1(0,T)$ and that $x_0^n \to x_0$ as $n\to \infty$. Then, there exists a subsequence $(x_{n_k})_k$ of $(x_n)_n$ and $x\in \operatorname{AC}\left(I;\H\right)$ such that
\begin{enumerate}[label=(\roman{*})]
\item\label{comp-i}  $x_{n_k}(t)\rightharpoonup x(t)$ in $\H$ as $k\to +\infty$ for all $t\in I$.
\item\label{comp-ii} $x_{n_k}\rightharpoonup x$ in $L^1\left(I;\H\right)$ as $k\to +\infty$.
\item\label{comp-iii} $\dot{x}_{n_k}\rightharpoonup \dot{x}$ in $L^1\left(I;\H\right)$ as $k\to +\infty$.
\item\label{comp-iv} $\Vert \dot{x}(t)\Vert \leq \psi(t)$ a.e. $t\in I$.
\end{enumerate}
\end{lemma}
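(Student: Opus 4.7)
The plan is to obtain the limit $x$ by first extracting a weakly convergent subsequence of the derivatives in $L^{1}(I;\H)$ and then constructing $x$ via integration. From $\|\dot x_n(t)\|\le \psi(t)$ a.e.\ and $x_n(0)=x_0^n\to x_0$, the elementary inequalities
\[
\|x_n(t)-x_n(s)\|\le \Bigl|\int_s^t \psi(r)\,dr\Bigr|,\qquad \|x_n(t)\|\le \|x_0^n\|+\|\psi\|_{L^1},
\]
render $(x_n)_n$ equicontinuous and uniformly bounded in $C(I;\H)$, while the integrable majorant $\psi$ makes $(\dot x_n)_n$ uniformly integrable in $L^1(I;\H)$.

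Since $\H$ is separable and reflexive, the Dunford--Pettis theorem for Bochner-integrable maps yields a subsequence (not relabeled) and $v\in L^1(I;\H)$ such that $\dot x_{n_k}\rightharpoonup v$ in $L^1(I;\H)$; this is precisely \ref{comp-iii}. Define
\[
x(t):=x_0+\int_0^t v(s)\,ds\qquad (t\in I),
\]
so $x\in \operatorname{AC}(I;\H)$ with $\dot x=v$ a.e. For any $t\in I$ and $h\in \H$, testing $x_{n_k}(t)=x_0^{n_k}+\int_0^t \dot x_{n_k}(s)\,ds$ against the element $\mathbf{1}_{[0,t]}\otimes h\in L^\infty(I;\H)$ and using $x_0^{n_k}\to x_0$ yields $\langle x_{n_k}(t),h\rangle \to \langle x(t),h\rangle$, which is \ref{comp-i}. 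Then \ref{comp-ii} follows from the pointwise weak convergence of $x_{n_k}(\cdot)$ together with the uniform bound $\|x_{n_k}(\cdot)\|\le \|x_0^{n_k}\|+\|\psi\|_{L^1}$, via dominated convergence applied to pairings with arbitrary $g\in L^\infty(I;\H)$.

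Finally, to establish \ref{comp-iv}, I would invoke Mazur's lemma: a sequence of convex combinations of the tails of $(\dot x_{n_k})_k$ converges to $\dot x$ in $L^1$-norm, hence, up to a further subsequence, almost everywhere on $I$; since each such convex combination is pointwise majorized by $\psi$, the bound $\|\dot x(t)\|\le \psi(t)$ passes to the limit. The main technical step where care is required is the relative weak $L^1$-compactness of the derivatives, which is where the separable Hilbert (hence reflexive) structure of $\H$ is crucially used through the Dunford--Pettis criterion; once this is in hand, the remaining items follow by routine functional-analytic arguments.
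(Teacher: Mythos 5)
Your argument is correct and coincides with the standard proof of this lemma (the paper itself only cites \cite[Lemma~2.2]{JV-Moreau} rather than reproving it): uniform integrability of the derivatives via the majorant $\psi$, relative weak compactness in $L^{1}(I;\H)$ from the Dunford--Pettis criterion in the reflexive Bochner setting, reconstruction of $x$ by integration to get \ref{comp-i}--\ref{comp-iii}, and Mazur's lemma for the pointwise bound \ref{comp-iv}. No gaps.
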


\noindent \textbf{Truncated Hausdorff Distance}

Let $r\in ]0,+\infty]$ be a given extended real and let $A$ and $B$ be nonempty subsets of $\H$.  We define the $r$-\emph{truncated excess} of $A$ over $B$ as the extended real
$$
\operatorname{exp}_{r}(A,B):=\sup_{x\in A\cap r\mathbb{B}}d(x,B).
$$
It is clear that under the usual convention $r\mathbb{B}=\H$ for $r=+\infty$, the  $r$-truncated excess of $A$ over $B$ is the usual \emph{excess} of $A$ over $B$, i.e., 
$$
\operatorname{exc}_{\infty}(A,B)=\sup_{x\in A}d(x,B)=:\operatorname{exc}(A,B).
$$
In this paper, our results are based on the following basic notions of distance between sets. 
\begin{itemize}
\item $$
{\operatorname{haus}}_{r}(A,B):=\max\{\operatorname{exc}_{r}(A,B),\operatorname{exc}_r(B,A)\}
$$
\item $$
\widehat{\operatorname{haus}}_{r}(A,B):=\sup_{z\in r\mathbb{B}}\vert d(z,A)-d(z,B)\vert. 
$$
\end{itemize}
The quantity ${\operatorname{haus}}_{r}(A,B)$ receives the name of $r$-\emph{truncated Hausdorff distance}. It is important to emphasize that  $$\operatorname{haus}_r(A,B)\leq \widehat{\operatorname{haus}}_{r}(A,B).$$
Moreover, for any extended real $r^{\prime}\geq 2r +\max\{ d_A(0),d_B(0)\}$, the following inequality holds
$$
\widehat{\operatorname{haus}}_{r}(A,B)\leq {\operatorname{haus}}_{r^{\prime}}(A,B).
$$
As a consequence of the last two inequalities, we obtain the following result.
\begin{proposition} Let $C\colon I\rightrightarrows \H$ be a set-valued map with nonempty and closed sets. Then, the following assertions are equivalent. 
\begin{enumerate}
\item[(a)]  for all $r\geq 0$, there exists $\kappa_r\geq 0$ such that
$$
\widehat{\operatorname{haus}}_{r}(C(t),C(s))\leq \kappa_r \vert t-s\vert \textrm{ for all } s,t\in I.
$$
\item[(b)] for all $r\geq 0$, there exists $\kappa_r\geq 0$ such that
$$
{\operatorname{haus}}_{r}(C(t),C(s))\leq \kappa_r \vert t-s\vert \textrm{ for all } s,t\in I.
$$
\end{enumerate}
\end{proposition}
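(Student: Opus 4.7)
The direction $(a) \Rightarrow (b)$ is immediate from the inequality $\operatorname{haus}_r(A,B) \leq \widehat{\operatorname{haus}}_r(A,B)$ recorded just before the proposition: given $(a)$, for each $r \geq 0$ and $s,t \in I$,
\begin{equation*}
\operatorname{haus}_r(C(t),C(s)) \leq \widehat{\operatorname{haus}}_r(C(t),C(s)) \leq \kappa_r |t-s|.
\end{equation*}

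The reverse implication $(b) \Rightarrow (a)$ is the substantive one, and I would base it on the second inequality, $\widehat{\operatorname{haus}}_r(A,B) \leq \operatorname{haus}_{r'}(A,B)$ whenever $r' \geq 2r + \max\{d_A(0),d_B(0)\}$. Applied with $A = C(t)$, $B = C(s)$, this yields the desired Lipschitz bound via $(b)$, provided $r'$ can be chosen \emph{independently} of $s, t \in I$. Hence the crux is to find a constant $M$ such that $d_{C(t)}(0) \leq M$ for all $t \in I$; once this is available, taking $r' := 2r + M$ gives
\begin{equation*}
\widehat{\operatorname{haus}}_r(C(t),C(s)) \leq \operatorname{haus}_{r'}(C(t),C(s)) \leq \kappa_{r'}|t-s|,
\end{equation*}
so $(a)$ holds with constant $\kappa_{r'}$.

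To produce such an $M$, I would fix $t_0 \in I$, pick $x_0 \in C(t_0)$ with $\Vert x_0\Vert \leq d_{C(t_0)}(0)+1 =: r_0$, so that $x_0 \in C(t_0) \cap r_0\mathbb{B}$. For an arbitrary $t \in I$, the triangle inequality and assumption $(b)$ at level $r_0$ yield
\begin{equation*}
d_{C(t)}(0) \leq \Vert x_0\Vert + d_{C(t)}(x_0) \leq r_0 + \operatorname{exc}_{r_0}(C(t_0),C(t)) \leq r_0 + \kappa_{r_0}|t-t_0|,
\end{equation*}
which is uniformly bounded since $I = [0,T]$ is compact; thus $M := r_0 + \kappa_{r_0} T$ does the job.

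The main subtle point is precisely this uniform bound on $d_{C(\cdot)}(0)$: the inequality $\widehat{\operatorname{haus}}_r \leq \operatorname{haus}_{r'}$ is only a local comparison, so one must leverage the Lipschitz character of $(b)$ (not merely continuity) together with the boundedness of $I$ to promote it to the global Lipschitz estimate demanded by $(a)$. Everything else is a direct manipulation of the two comparison inequalities stated prior to the proposition.
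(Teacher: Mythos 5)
Your proof is correct and follows exactly the route the paper intends: the paper derives the proposition "as a consequence of the last two inequalities," which is precisely your use of $\operatorname{haus}_r\leq\widehat{\operatorname{haus}}_r$ for $(a)\Rightarrow(b)$ and of $\widehat{\operatorname{haus}}_r\leq\operatorname{haus}_{r'}$ for $(b)\Rightarrow(a)$. Your additional step establishing the uniform bound $d_{C(t)}(0)\leq r_0+\kappa_{r_0}T$ (so that $r'$ can be chosen independently of $s,t$) is a detail the paper leaves implicit, and you supply it correctly.
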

Therefore, we use $(a)$ and $(b)$ indistinctly.
\begin{definition}
We say that a set-valued map $C\colon I\rightrightarrows \H$ is 
\begin{enumerate}
\item[(i)] \emph{Lipschitz with respect to the Hausdorff distance} if there exists $\kappa \geq 0$ such that
$$
\operatorname{haus}(C(t),C(s)):=\sup_{z\in \H}\vert d(z,A)-d(z,B)\vert \leq  \kappa \vert t-s\vert \textrm{ for all } s,t\in I.
$$

\item[(ii)] \emph{Lipschitz with respect to the truncated Hausdorff distance} if for all $r\geq 0$, there exists $\kappa_r\geq 0$ such that
$$
\widehat{\operatorname{haus}}_{r}(C(t),C(s))\leq \kappa_r \vert t-s\vert \textrm{ for all } s,t\in I.
$$
\end{enumerate}
\end{definition}
It is clear that (i) implies (ii), but the reverse implication is not true. Moreover, as discussed in \cite{NacryT2019,Tolstonogov2014},  the Hausdorff distance can be too restrictive for practical applications. Indeed, let us consider the half-space moving set $C\colon I\times \H \rightrightarrows \H$ defined by
$$
C(t,x):=\{z\in \H \colon \langle \zeta(t),z\rangle - \beta(t,x) \leq 0\} \textrm{ for all } (t,x)\in I\times \H,
$$
where $\zeta\colon I\to \H$ is a $\gamma$-Lipschitz mapping for some real $\gamma\geq 0$ and the map $\beta\colon I\times \H \to \H$ satisfies the following Lipschitz property:
$$
\vert \beta(s,x)-\beta(t,y)\vert \leq \gamma\vert t-s\vert +L\Vert x-y\Vert \textrm{ for all } s,t \in I \textrm{ and } x,y\in \H.
$$
Assume that the following normalization condition holds true: $\Vert \zeta(t)\Vert=1$ for all $t\in I$. According to \cite[Theorem~6.30]{Deutsch2001}, we have 
$$
d_{C(t,x)}(z)=\left( \langle \zeta(t),z\rangle -\beta(t,x)\right)^{+} \textrm{ for all } (t,x)\in I\times \H,
$$
where $r^+:=\max\{0,r\}$ for all $r\in \mathbb{R}$. Hence, we can check that for each $r>0$ and any $s,t\in I$ with $s\neq t$ and $x,y\in \H$,
$$
\widehat{\operatorname{haus}}_{r}(C(s,x),C(t,y))=\sup_{z\in r \mathbb{B}}\vert d_{C(s,x)}(z)-d_{C(t,y)}(z)\vert \leq \gamma \vert t-s\vert \, (r+1)+L\Vert x-y\Vert.
$$
However,  $\operatorname{Haus}(C(t,x),C(s,y))=+\infty$, which shows that $C$ is Lipschitz with respect to the truncated Hausdorff distance but not Lipschitz with respect to Hausdorff distance.

\section{Main Hypotheses}\label{hipo-sol}

In this section,  we list the main hypotheses used throughout the paper.

\noindent \paragraph{ Hypotheses on the  map $\A\colon \H\to \H$:} $\A$ is a (possibly) nonlinear operator satisfying the following conditions:
\begin{enumerate}
\item[\namedlabel{HA1}{$(\mathcal{H}_{\A}^1)$}] There exists $m>0$ such that 
$$
\langle \A(x)-\A(y),x-y\rangle \geq m\Vert x-y\Vert^2 \textrm{ for all } x,y\in \H.
$$

\item[\namedlabel{HA2}{$(\mathcal{H}_{\A}^2)$}] There exists $M>0$ such that 
$$
\Vert \A(x)-\A(y)\Vert \leq M \Vert x-y\Vert  \textrm{ for all } x,y\in \H.
$$

\end{enumerate}

\paragraph{ Hypotheses on $C\colon I\times \H\rightrightarrows \H$:} $C$ is a set-valued map with closed and nonempty values. Moreover, the following condition will be used in the paper.

\begin{enumerate}

\item[\namedlabel{Hxlip}{$(\mathcal{H}_1)$}] For all $r\geq 0$, there exist  $\kappa_r \geq 0$ such that for $s,t\in I$ and $x,y\in \H$
\begin{equation*}
\sup_{z\in r\mathbb{B}}|d(z,C(t,x))-d(z,C(s,y))|\leq \kappa_r|t-s|+L\Vert x-y\Vert,
\end{equation*}
where  $L\in [0,m[$ is independent of $r$ and $m$ is the constant given by \ref{HA1}.

\item[\namedlabel{Hxalpha}{$(\mathcal{H}_2)$}] There exist constants $\alpha\in ]0,1]$ and $\rho\in ]0,+\infty]$ such that for every $y\in \H$
\begin{equation*}
\begin{aligned}
  0<\alpha&\leq \inf_{x\in U_{\rho}\left(C(t,y)\right)}d\left(0,\partial d(\cdot,C(t,y))(x)\right) & \textrm{ a.e. } t\in I,
  \end{aligned}
\end{equation*}
where $U_{\rho}\left(C(t,y)\right)=\left\{ x\in \H \colon 0<d(x,C(t,y))<\rho \right\}$.

\item[\namedlabel{Hxsubsmooth}{$(\mathcal{H}_3)$}] The family $\{C(t,v)\colon (t,v)\in I\times \H\}$ is equi-uniformly subsmooth.

\item[\namedlabel{Hxcomp}{$(\mathcal{H}_{4})$}] There exists $k\in L^1(0,T)$ such that for every $t\in I$, every $r>0$ and every bounded set $B\subset \H$, 
$$
\gamma\left(C(t,B)\cap r\mathbb{B}\right)\leq k(t)\gamma(\A(B)),
$$
 where $\gamma=\alpha$ or $\gamma=\beta$ is either the Kuratowski or the Hausdorff measure of non-compactness (see Proposition \ref{Kura}), $k(t)<1$ for all $t\in I$ and $\mathcal{A}\colon \H \to \H$ is an operator satisfying \ref{HA1} and \ref{HA2}.
\end{enumerate}

\paragraph{ Hypotheses on  $C\colon I\rightrightarrows \H$:} $C$ is a set-valued map with  closed and nonempty values. Moreover, the following condition will be used in the paper.
\begin{enumerate}

\item[\namedlabel{Hlip}{$(\mathcal{H}_5)$}] For all $r \geq 0$, there exists $\kappa_{r} \geq 0$ such that for all $s,t\in I$ 
\begin{equation*}
\sup_{z\in r \mathbb{B}}\vert d(z,C(t))-d(z,C(s))\vert \leq \kappa_{r}|t-s|.
\end{equation*}

\item[\namedlabel{Halpha}{$(\mathcal{H}_6)$}] There exist two constants $\alpha\in ]0,1]$ and $\rho\in ]0,+\infty]$ such that
\begin{equation*}
\begin{aligned}
  0<\alpha&\leq \inf_{x\in U_{\rho}\left(C(t)\right)}d\left(0,\partial d(x,C(t))\right) & \textrm{ a.e. } t\in I,
  \end{aligned}
\end{equation*}
where $U_{\rho}\left(C(t)\right)=\left\{ x\in \H \colon 0<d(x,C(t))<\rho \right\}$.

\item[\namedlabel{Hcomp}{$(\mathcal{H}_7)$}] For a.e. $t\in I$ the set $C(t)$ is ball-compact, that is, for every $r>0$ the set $C(t)\cap r\mathbb{B}$ is compact in $\H$.

\item[\namedlabel{Hprox}{$(\mathcal{H}_8)$}] For a.e. $t\in I$ the set $C(t)$ is $r$-uniformly prox-regular for some $r>0$.
\end{enumerate}

\begin{remark} 
\begin{enumerate}[label=(\roman{*})]
\item Let $L\in [0,m[$. Under \ref{Hxsubsmooth} for every $\alpha\in ]\sqrt{\frac{L}{m}},1]$ there exists $\rho>0$ such that \ref{Hxalpha} holds. This follows from Proposition \ref{alpha-far}.
\item The condition $L\in [0,m[$ in \ref{Hxlip} is sharp, as it is shown in \cite{Kunze1998}.
\end{enumerate}
\end{remark}

\section{Preparatory Lemmas}\label{preparatory}
In this section, we give some preliminary lemmas that will be used in the following sections. They are related to set-valued maps and properties of the distance function.

Recall that $-d(\cdot, S )$ has a directional  derivative that coincides with the Clarke directional derivative of $-d(\cdot, S )$ whenever $x\notin S$ (see, e.g., \cite{BFG1987}). Thus, we obtain the following lemma.
\begin{lemma}\label{lema0}
  Let $S\subset \H$ be a closed set, $x\notin S$ and $v\in \H$. Then
\begin{equation*}
\lim_{h\downarrow 0} \frac{d(x+hv,S)-d(x,S)}{h}= \min_{y^*\in \partial d(x,S)} \left\langle y^*, v \right \rangle.
\end{equation*}
\end{lemma}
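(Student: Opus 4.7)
The plan is to exploit the Clarke regularity of $-d(\cdot,S)$ at points outside $S$, which is the fact recalled just before the statement (citing \cite{BFG1987}). Once this regularity is in hand, the limit in question is literally the (one-sided) directional derivative, and the rest is a manipulation of sign conventions for the Clarke subdifferential of a Lipschitz function.

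First, I would set up the framework: the function $d(\cdot,S)$ is $1$-Lipschitz on $\H$, hence so is $-d(\cdot,S)$, and consequently the Clarke subdifferential $\partial d(x,S)$ is a nonempty, convex, weakly compact subset of the closed unit ball $\mathbb{B}$. A standard consequence of the definition of the Clarke subdifferential for Lipschitz functions is that $\partial(-d(\cdot,S))(x)=-\partial d(x,S)$, and the generalized directional derivative satisfies
\begin{equation*}
(-d(\cdot,S))^{\circ}(x;v)=\sup_{\zeta\in \partial(-d(\cdot,S))(x)}\langle \zeta,v\rangle=-\inf_{y^*\in \partial d(x,S)}\langle y^*,v\rangle.
\end{equation*}

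Second, I would invoke the cited regularity of $-d(\cdot,S)$ at the point $x\notin S$: the usual one-sided directional derivative of $-d(\cdot,S)$ at $x$ in the direction $v$ exists and coincides with $(-d(\cdot,S))^{\circ}(x;v)$. Rewriting this in terms of $d(\cdot,S)$ (just multiplying numerator and denominator by $-1$) gives
\begin{equation*}
\lim_{h\downarrow 0}\frac{d(x+hv,S)-d(x,S)}{h}=-(-d(\cdot,S))^{\circ}(x;v)=\inf_{y^*\in \partial d(x,S)}\langle y^*,v\rangle.
\end{equation*}

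Third, I would argue that the infimum is attained, upgrading it to a minimum: the set $\partial d(x,S)\subset \mathbb{B}$ is convex and weakly compact, and the map $y^*\mapsto \langle y^*,v\rangle$ is weakly continuous on $\H$, so it attains its minimum on $\partial d(x,S)$. This yields the claimed equality with $\min$ in place of $\inf$.

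There is no serious obstacle here: the substantive content is precisely the Clarke regularity of $-d(\cdot,S)$ off $S$, which is imported as a black box from \cite{BFG1987}. The remaining work is purely bookkeeping, namely translating between $\partial(-d)$ and $\partial d$ and verifying attainment via weak compactness of $\partial d(x,S)$.
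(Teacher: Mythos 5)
Your proposal is correct and follows exactly the route the paper intends: the paper gives no explicit proof but derives the lemma directly from the recalled fact (citing \cite{BFG1987}) that $-d(\cdot,S)$ is directionally differentiable off $S$ with directional derivative equal to its Clarke directional derivative, which is precisely the regularity you invoke. Your remaining steps --- the identity $\partial(-d(\cdot,S))(x)=-\partial d(x,S)$, the sign bookkeeping, and attainment of the infimum via weak compactness of $\partial d(x,S)\subset\mathbb{B}$ --- are the standard details the paper leaves implicit.
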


The next lemma characterizes the Clarke subdifferential of the distance function to a moving sets (see \cite[Lemma~4.2]{JV-Moreau}).
\begin{lemma}[\cite{JV-Moreau}]\label{lemma-formula}
 Assume that \ref{Hxcomp} holds. Let $t\in I$, $y\in \H$ and $x\notin C(t,y)$. Then,
\begin{equation*}
\partial d_{C(t,y)}(x)=\frac{x-\operatorname{cl}{\operatorname{co}}\operatorname{Proj}_{C(t,y)}(x)}{d_{C(t,y)}(x)}.
\end{equation*}
\end{lemma}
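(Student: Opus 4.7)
The plan is to show that both sides are closed convex subsets of $\H$ with the same support function. First I would observe that the hypothesis \ref{Hxcomp} applied to the singleton $B=\{y\}$ gives $\gamma(C(t,y)\cap r\mathbb{B})\leq k(t)\gamma(\A(\{y\}))=0$ for every $r>0$, so that $C(t,y)$ is ball-compact and, in particular, $\operatorname{Proj}_{C(t,y)}(x)$ is a nonempty compact subset of $\H$. To simplify notation, set $S:=C(t,y)$.

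For the inclusion $\supset$, I would pick any $p\in \operatorname{Proj}_{S}(x)$. A direct computation from the definition of the proximal subdifferential (using that $\|\cdot-p\|$ is a global upper bound for $d_S$) shows that $\frac{x-p}{d_S(x)}\in \partial_P d_S(x)$; since the Clarke subdifferential contains the proximal one and is closed and convex, it also contains $\frac{x-\operatorname{cl}\operatorname{co}\operatorname{Proj}_{S}(x)}{d_S(x)}$. Equivalently, taking support functions,
\begin{equation*}
d_S^{\circ}(x;v)\geq \max_{p\in \operatorname{Proj}_{S}(x)}\frac{\langle x-p,v\rangle}{d_S(x)} \quad \text{for every } v\in \H.
\end{equation*}

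For the converse, I would pick sequences $t_n\downarrow 0$ and $y_n\to x$ realizing the $\limsup$ in the definition of $d_S^{\circ}(x;v)$, choose $p_n\in \operatorname{Proj}_{S}(y_n)$ (nonempty by ball-compactness), and use the smoothness of the norm away from the origin to write
\begin{equation*}
d_S(y_n+t_nv)-d_S(y_n)\leq \|y_n+t_nv-p_n\|-\|y_n-p_n\|=t_n\Bigl\langle \tfrac{y_n-p_n}{\|y_n-p_n\|},v\Bigr\rangle+o(t_n),
\end{equation*}
where $\|y_n-p_n\|=d_S(y_n)\to d_S(x)>0$ keeps the denominator bounded away from zero. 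Ball-compactness of $S$, together with the boundedness of $(p_n)_n$, yields (up to a subsequence) $p_n\to \bar{p}$, and passing to the limit in $d_S(y_n)=\|y_n-p_n\|$ gives $\bar{p}\in \operatorname{Proj}_{S}(x)$. Dividing by $t_n$ and letting $n\to+\infty$ produces $d_S^{\circ}(x;v)\leq \tfrac{\langle x-\bar p,v\rangle}{d_S(x)}\leq \max_{p\in \operatorname{Proj}_{S}(x)}\tfrac{\langle x-p,v\rangle}{d_S(x)}$, matching the lower bound above. Since both sides are closed convex sets with the same support function, they coincide.

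The main obstacle is precisely this converse inequality: the easy direction and the closedness/convexity of $\partial d_S(x)$ are classical, but upgrading $\supset$ to an equality hinges on extracting a cluster point $\bar p$ of the approximate projections $p_n$, which fails for a general closed set. This is where the ball-compactness supplied by \ref{Hxcomp} enters in an essential way.
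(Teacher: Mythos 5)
The paper does not actually prove this lemma; it is imported verbatim from \cite[Lemma~4.2]{JV-Moreau}, so your argument has to be judged on its own terms. Most of it holds up: the observation that \ref{Hxcomp} applied to $B=\{y\}$ forces $S:=C(t,y)$ to be ball-compact, hence $\operatorname{Proj}_{S}(x)$ nonempty and compact, is correct; and the whole inclusion $\partial d_S(x)\subset \frac{x-\operatorname{cl}\operatorname{co}\operatorname{Proj}_S(x)}{d_S(x)}$ (your ``converse'') is sound --- projecting the points $y_n$ that realize the $\limsup$ in $d_S^{\circ}(x;v)$, using that $\Vert y_n-p_n\Vert=d_S(y_n)\to d_S(x)>0$ keeps the second-order error uniform, and extracting a strong cluster point $\bar p\in\operatorname{Proj}_S(x)$ by ball-compactness. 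That is indeed the step where \ref{Hxcomp} is essential.

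The gap is in your $\supset$ direction. The claim that $\frac{x-p}{d_S(x)}\in\partial_P d_S(x)$ for $p\in\operatorname{Proj}_S(x)$ is false in general: take $S=\{0,2\}\subset\mathbb{R}$ and $x=1$; then $\operatorname{Proj}_S(1)=\{0,2\}$, but $d_S(y)=1-\vert y-1\vert$ near $y=1$ has a concave kink there, so $\partial_P d_S(1)=\emptyset$ while the lemma's right-hand side is $[-1,1]$. The reason your ``direct computation'' cannot work as stated is that the majorization $d_S\leq\Vert\cdot-p\Vert$ with equality at $x$ yields an \emph{upper} quadratic estimate for $d_S$ near $x$, i.e.\ a proximal \emph{super}gradient, whereas a proximal subgradient requires a lower estimate. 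The inclusion you want is nevertheless true, and your computation is the right one once it is routed through $-d_S$: the inequality $d_S(y)\leq\Vert y-p\Vert\leq d_S(x)+\bigl\langle \tfrac{x-p}{d_S(x)},y-x\bigr\rangle+\tfrac{1}{2d_S(x)}\Vert y-x\Vert^2$ says exactly that $-\tfrac{x-p}{d_S(x)}\in\partial_P(-d_S)(x)\subset\partial(-d_S)(x)=-\partial d_S(x)$, the last equality holding because $d_S$ is Lipschitz and the Clarke subdifferential satisfies $\partial(-f)=-\partial f$. Alternatively, invoke Lemma \ref{lema0} (the regularity of $-d_S$ off $S$, from \cite{BFG1987}): $\min_{\xi\in\partial d_S(x)}\langle\xi,w\rangle=\lim_{h\downarrow 0}h^{-1}\left(d_S(x+hw)-d_S(x)\right)\leq\bigl\langle\tfrac{x-p}{d_S(x)},w\bigr\rangle$ for every $w$, and a separation argument then forces $\tfrac{x-p}{d_S(x)}\in\partial d_S(x)$. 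With either repair, combined with your support-function comparison, the proof is complete.
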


The following result can be proved in the same way that  \cite[Lemma~4.3]{JV-Moreau}
\begin{lemma}\label{usc}
Let $\A\colon \H \to \H$ be a Lipschitz operator.  If  \ref{Hxlip}, \ref{Hxsubsmooth} and \ref{Hxcomp} hold then, for all $t\in I$, then the set-valued map $x\rightrightarrows \partial d(\cdot,C(t,x))(\A(x))$ is upper semicontinuous from $\H$ into $\H_w$.
\end{lemma}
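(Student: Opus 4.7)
My plan is to verify the sequential characterization of upper semicontinuity from $\H$ into $\H_w$: for any $x_n\to x$ in $\H$ and any $v_n\in \partial d(\cdot,C(t,x_n))(\A(x_n))$ with $v_n\rightharpoonup v$, I will show that $v\in\partial d(\cdot,C(t,x))(\A(x))$. Since the subdifferentials of a $1$-Lipschitz function lie in the weakly compact set $\mathbb{B}$, this sequential property is equivalent to usc into $\H_w$. The preliminary bookkeeping runs as follows. The sequence $(v_n)_n\subset \mathbb{B}$ passes to a weak limit $v$ with $\|v\|\le 1$; by Lipschitzness of $\A$ and \ref{Hxlip} applied with any $r$ bounding $\{\A(x_n)\}_n$,
\[
d_n:=d(\A(x_n),C(t,x_n))\longrightarrow d:=d(\A(x),C(t,x)).
\]
Moreover, \ref{Hxcomp} applied to the relatively compact set $B=\{x_n\}_n\cup\{x\}$ (for which $\gamma(\A(B))=0$) yields that $\bigcup_n\bigl(C(t,x_n)\cap r\mathbb{B}\bigr)$ is relatively compact, so every bounded sequence $p_n\in C(t,x_n)$ admits a strongly convergent subsequence whose limit belongs to $C(t,x)$ via \ref{Hxlip}.

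In the nondegenerate case $d>0$, one has $\A(x_n)\notin C(t,x_n)$ for large $n$, and Lemma~\ref{lemma-formula} gives $v_n=(\A(x_n)-q_n)/d_n$ with $q_n\in \operatorname{cl}\operatorname{co}\operatorname{Proj}_{C(t,x_n)}(\A(x_n))$. Then $q_n=\A(x_n)-d_nv_n\rightharpoonup q:=\A(x)-dv$, so by Lemma~\ref{lemma-formula} it suffices to prove $q\in \operatorname{cl}\operatorname{co}\operatorname{Proj}_{C(t,x)}(\A(x))$. I would verify this through the support-function characterization of closed convex sets: for each $h\in\H$, pick a near-maximizer of $\langle h,\cdot\rangle$ over $\operatorname{Proj}_{C(t,x_n)}(\A(x_n))$, extract a strong limit by the compactness step, identify it as an element of $\operatorname{Proj}_{C(t,x)}(\A(x))$, and obtain
\[
\limsup_n \sup_{p\in \operatorname{Proj}_{C(t,x_n)}(\A(x_n))}\langle h,p\rangle \ \le\ \sup_{p\in \operatorname{Proj}_{C(t,x)}(\A(x))}\langle h,p\rangle.
\]
Passing to the limit in $\langle h,q_n\rangle\le \sup_{p\in \operatorname{Proj}_{C(t,x_n)}(\A(x_n))}\langle h,p\rangle$ closes the case.

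In the degenerate case $d=0$ one has $\A(x)\in C(t,x)$, every projection $p_n\in\operatorname{Proj}_{C(t,x_n)}(\A(x_n))$ satisfies $\|p_n-\A(x_n)\|\to 0$ so $p_n\to \A(x)$ strongly, and Lemma~\ref{lemma-formula} collapses. Here I would exploit the equi-uniform subsmoothness \ref{Hxsubsmooth}: writing each $v_n$ (or its generating unit proximal normals $(\A(x_n)-p_n^k)/d_n\in N(C(t,x_n);p_n^k)\cap\mathbb{B}$) and using the subsmoothness estimate
\[
\langle (\A(x_n)-p_n^k)/d_n,\,y-p_n^k\rangle\le \epsilon\,\|y-p_n^k\|
\]
(valid uniformly in $n,k$ whenever $y\in C(t,x_n)$ and $\|y-p_n^k\|<\delta(\epsilon)$), one averages in the closed convex hull and passes to the weak limit in $n$, using \ref{Hxlip} to transport any $y\in C(t,x)$ near $\A(x)$ to a nearby point of $C(t,x_n)$. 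Letting $\epsilon\downarrow 0$ yields $\langle v,y-\A(x)\rangle\le 0$ for $y\in C(t,x)$ close to $\A(x)$, whence $v\in N^C(C(t,x);\A(x))\cap\mathbb{B}=\partial d(\cdot,C(t,x))(\A(x))$.

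The main obstacle is the degenerate case $d=0$, where the clean projection formula is unavailable and one must carefully push equi-uniform subsmoothness through the weak limit of normals at approximating points. The argument otherwise follows the template of \cite[Lemma~4.3]{JV-Moreau}, the only substantive modification being that all distance controls are derived from the \emph{truncated} Hausdorff Lipschitz hypothesis \ref{Hxlip}; this is sufficient because $\A$ is Lipschitz and $(x_n)_n$ is convergent, so every point relevant to the argument stays in a fixed ball $r\mathbb{B}$ throughout.
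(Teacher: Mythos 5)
Your argument is essentially the paper's own proof: the paper does not write one out but defers to \cite[Lemma~4.3]{JV-Moreau}, and your reconstruction follows that template faithfully — sequential closedness into $\H_w$ with values in the weakly compact ball $\mathbb{B}$, the nondegenerate case via Lemma~\ref{lemma-formula} together with the strong compactness of projections supplied by \ref{Hxcomp} applied to the compact set $\{x_n\}_n\cup\{x\}$, the degenerate case via equi-uniform subsmoothness, and the correct observation that the truncated hypothesis \ref{Hxlip} suffices because all points involved remain in a fixed ball. The one loose step is the end of the degenerate case: since $\delta$ depends on $\varepsilon$, letting $\varepsilon\downarrow 0$ does not give $\langle v,y-\A(x)\rangle\le 0$ for all $y\in C(t,x)$ in a fixed neighborhood of $\A(x)$, but only the Fr\'echet-type inequality $\limsup_{y\to\A(x),\,y\in C(t,x)}\langle v,y-\A(x)\rangle/\Vert y-\A(x)\Vert\le 0$; this still closes the proof, since at points of a closed set $S$ one has the elementary inclusion $N_F(S;\cdot)\cap\mathbb{B}\subset \partial_F d_S\subset\partial d_S$ (no appeal to the equality $N(S;\cdot)\cap\mathbb{B}=\partial d_S$ is actually needed).
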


The following lemma provides some estimations for the distance function to a moving set depending on time and state. 
\begin{lemma}\label{derivadas} Let  $\A\colon \H \to \H$ be a map satisfying \ref{HA1} and \ref{HA2}, $x, y\colon I\to \H$ be two absolutely continuous functions, and $C\colon I\times\ \H \rightrightarrows \H$ be a set-valued map with nonempty closed values satisfying \ref{Hxlip}. Set $z(t):=\A(x(t))$ for all $t\in I$. Then,
  \begin{enumerate}[label=(\roman{*})]
\item The function $t \to d(z(t),C(t,y(t)))$ is absolutely continuous over $I$.
\item For all $t\in \operatorname{int}(I)$, where $\dot{y}(t)$ exists,
\begin{equation*}
\begin{aligned}
&\limsup_{s\downarrow 0} \frac{d_{C(t+s,y(t+s))}(z(t+s))-d_{C(t,y(t))}(z(t))}{s}\\
&\leq \kappa_r+L{\Vert \dot{y}(t)\Vert}+
\limsup_{s\downarrow 0} \frac{d_{C(t,y(t))}(z(t+s))-d_{C(t,y(t))}(z(t))}{s},
\end{aligned}
\end{equation*} 
where $\kappa_r$ is the constant given by \ref{Hxlip} with $r:=\sup_{t\in I} \Vert z(t)\Vert<+\infty$.
\item For all $t\in \operatorname{int}(I)$, where $\dot{z}(t)$ exists,
\begin{equation*}
\limsup_{s\downarrow 0} \frac{d_{C(t,y(t))}(z(t+s))-d_{C(t,y(t))}(z(t))}{s}\leq \max_{y^*\in \partial d(z(t),C(t,y(t)))} \left\langle y^*, \dot{z}(t)\right \rangle.
\end{equation*}
\item For  all $t\in \{ t\in ]0,T[\colon z(t)\notin C(t,y(t))\}$, where $\dot{z}(t)$ exists,
\begin{equation*}
\lim_{s\downarrow 0} \frac{d_{C(t,y(t))}(z(t+s))-d_{C(t,y(t))}(z(t))}{s}= \min_{y^*\in \partial d(z(t),C(t,y(t)))} \left\langle y^*, \dot{z}(t)\right \rangle.
\end{equation*}
\item\label{measurability} For every $x\in \H$ the set-valued map $t\rightrightarrows  \partial d(\cdot,C(t,y(t)))(\A(x))$ is measurable.
\end{enumerate}
\end{lemma}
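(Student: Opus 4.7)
The plan is to handle the five items in turn, relying on the Lipschitz estimate \ref{Hxlip}, the $1$-Lipschitzianity of any distance function, and the classical Clarke-subdifferential calculus. A preliminary observation is that, by \ref{HA2} and the absolute continuity of $x$, the composition $z=\A\circ x$ is AC on the compact interval $I$; in particular $r:=\sup_{t\in I}\Vert z(t)\Vert<+\infty$, so on $r\mathbb{B}$ the hypothesis \ref{Hxlip} yields the constant $\kappa_r$ used in the statement.

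For \textbf{(i)}, I will split
\begin{align*}
&\vert d_{C(t,y(t))}(z(t))-d_{C(s,y(s))}(z(s))\vert\\
&\qquad\leq \vert d_{C(t,y(t))}(z(t))-d_{C(t,y(t))}(z(s))\vert+\vert d_{C(t,y(t))}(z(s))-d_{C(s,y(s))}(z(s))\vert,
\end{align*}
bounding the first summand by $\Vert z(t)-z(s)\Vert\leq M\int_s^t\Vert\dot x(\tau)\Vert d\tau$ (from \ref{HA2}) and the second by $\kappa_r\vert t-s\vert+L\Vert y(t)-y(s)\Vert$ (from \ref{Hxlip} applied at $z(s)\in r\mathbb{B}$); each piece is of the form $\int_s^t\phi$ with $\phi\in L^1$, giving absolute continuity. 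For \textbf{(ii)}, I insert $d_{C(t,y(t))}(z(t+s))$ to obtain
\begin{align*}
&d_{C(t+s,y(t+s))}(z(t+s))-d_{C(t,y(t))}(z(t))\\
&\qquad=[d_{C(t+s,y(t+s))}(z(t+s))-d_{C(t,y(t))}(z(t+s))]\\
&\qquad\quad+[d_{C(t,y(t))}(z(t+s))-d_{C(t,y(t))}(z(t))];
\end{align*}
the first bracket is controlled by $\kappa_r s+L\Vert y(t+s)-y(t)\Vert$ via \ref{Hxlip} and, after dividing by $s$ and taking $\limsup$, contributes $\kappa_r+L\Vert\dot y(t)\Vert$, while the second bracket is precisely the $\limsup$ term on the right of the claim.

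For \textbf{(iii)}, at points where $\dot z(t)$ exists I write $z(t+s)=z(t)+s\dot z(t)+\rho(s)$ with $\Vert\rho(s)\Vert/s\to 0$; by $1$-Lipschitzianity of $d_{C(t,y(t))}$, I can replace $z(t+s)$ by $z(t)+s\dot z(t)$ inside the difference quotient at a cost $o(1)$. The residual limsup is by definition bounded above by the Clarke generalized directional derivative $d_{C(t,y(t))}^{\circ}(z(t);\dot z(t))$, which is the support function of $\partial d_{C(t,y(t))}(z(t))$ evaluated at $\dot z(t)$, namely the maximum in the statement. For \textbf{(iv)}, the same $o(s)$-reduction applies, but the extra assumption $z(t)\notin C(t,y(t))$ allows me to invoke Lemma~\ref{lema0} with $S=C(t,y(t))$, $x=z(t)$, $v=\dot z(t)$, upgrading the limsup to a genuine limit equal to the minimum over the subdifferential.

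Finally, for \textbf{(v)} I use the classical characterization that a multifunction into the separable Hilbert space $\H$ with nonempty closed convex values is measurable if and only if, for every $h\in\H$, its support function in the direction $h$ is measurable in $t$. The support function of $\partial d(\cdot,C(t,y(t)))(\A(x))$ at $h$ is the Clarke generalized directional derivative $d_{C(t,y(t))}^{\circ}(\A(x);h)$, which I write as an infimum over $n\in\mathbb{N}$ of the supremum, along a countable dense set of pairs $(w,\lambda)$ with $\Vert w-\A(x)\Vert<1/n$ and $0<\lambda<1/n$, of $[d_{C(t,y(t))}(w+\lambda h)-d_{C(t,y(t))}(w)]/\lambda$. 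Each quotient is continuous in $t$ (by \ref{Hxlip} together with the continuity of $y$), and countable infima and suprema preserve measurability. I expect this last item to be the main obstacle: whereas (i)-(iv) reduce to direct Lipschitz/Clarke calculus, item (v) requires the measurability machinery for set-valued maps and a careful reduction of the Clarke subdifferential to its support function via a countable dense set.
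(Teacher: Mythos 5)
Your proposal is correct. For items (i) and (ii) it follows essentially the same route as the paper: the same insertion of the intermediate term $d_{C(t,y(t))}(z(t+s))$ and the same appeal to \ref{Hxlip} on the ball $r\mathbb{B}$ with $r=\sup_{t\in I}\Vert z(t)\Vert$ (your version of (i) is in fact slightly more careful, since you separate the increment in the set argument, bounded by $\kappa_r\vert t-s\vert+L\Vert y(t)-y(s)\Vert$, from the increment in the point argument, bounded by $M\int_s^t\Vert\dot x\Vert$, whereas the paper's displayed estimate compresses these). The genuine difference is in (iii), (iv) and (v): the paper simply cites \cite[Lemma~4.4]{JV-Moreau} for (iii) and (iv) and gives no explicit argument for (v), while you supply self-contained proofs. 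Your (iii) via the first-order expansion $z(t+s)=z(t)+s\dot z(t)+o(s)$, the $1$-Lipschitzianity of the distance, and the bound by $d^{\circ}_{C(t,y(t))}(z(t);\dot z(t))$ (the support function of the Clarke subdifferential, whose sup is attained on the weakly compact set $\partial d_{C(t,y(t))}(z(t))$) is sound; your (iv) correctly upgrades this to a limit and a minimum by invoking Lemma~\ref{lema0}, which is exactly the tool the paper records for this purpose; and your (v) via measurability of the support functions $t\mapsto d^{\circ}_{C(t,y(t))}(\A(x);h)$, written as a countable inf--sup of difference quotients that are continuous in $t$ by \ref{Hxlip} and the continuity of $y$, is the standard Castaing-type argument (the same characterization the paper itself uses later via \cite[Proposition~2.2.39]{PapaHandbook-1}). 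What your approach buys is a proof that does not lean on the external reference; what the paper's buys is brevity. No gaps.
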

\begin{proof} 
(iii) and (iv) follows from \cite[Lemma~4.4]{JV-Moreau}. To prove (i), let us consider $\psi\colon I\to \mathbb{R}$ be the function defined by $$\psi(t):=d(z(t),C(t,y(t))),$$where $z(t):=\A(x(t))$ for all $t\in I$. Fix $r=\sup_{t\in I} \Vert z(t)\Vert<+\infty$. Then, it follows from \ref{Hxlip} the existence of $\kappa_r\geq 0$ and $L\in [0,m[$ such that 
$$
\vert \psi(t+h)-\psi(t)\vert \leq \kappa_r \vert t-s\vert+L\Vert z(t+h)-z(t)\Vert, 
$$ 
which implies the absolute continuity of $t \to d(z(t),C(t,y(t)))$.  \newline
To prove (ii), consider $t\in \operatorname{int}(I)$ where $\dot{y}(t)$ exists. Then, for $s>0$ small enough,
\begin{equation*}
\begin{aligned}
\frac{\psi(t+s)-\psi(t)}{s}&=\frac{d(z(t+s),C(t+s,y(t+s)))-d(z(t+s),C(t,y(t)))}{s}\\
&+\frac{d(z(t+s),C(t,y(t)))-d(z(t),C(t,y(t)))}{s}\\
&\leq \kappa_r+ {L\frac{\Vert y(t+s)-y(t)\Vert}{s}}\\
&+\frac{d(x(t+s),C(t,z(t)))-d(x(t),C(t,z(t)))}{s},
\end{aligned}
\end{equation*} 
and taking the superior limit, we get the desired inequality.
\qed
\end{proof}

The following result shows that the set-valued map $(t,x)\rightrightarrows \frac{1}{2}\partial d^2_{C(t,x)}(\A(x))$ satisfies the conditions of Theorem \ref{bothe-2}.
\begin{proposition}\label{distance-prop-2}
  Assume that \ref{HA2}, \ref{Hxlip}, \ref{Hxsubsmooth} and \ref{Hxcomp} hold. Then, the set-valued map $G\colon I\times \H\rightrightarrows \H$ defined by $G(t,x):=\frac{1}{2}\partial d^2_{C(t,x)}(\A(x))$ satisfies:
\begin{enumerate}[label=(\roman{*})]
  \item for all $x\in \H$ and all $t\in I$, $G(t,x)=\A(x)-\operatorname{cl}{\operatorname{co}}\operatorname{Proj}_{C(t,x)}(\A(x))$.
  \item for every $x\in \H$ the set-valued map $G(\cdot,x)$ is measurable.
  \item for every $t\in I$, $G(t,\cdot)$ is upper semicontinuous from $\H$ into $\H_w$.
  \item for every $t\in I$ and $B\subset \H$ bounded, $\gamma\left(G(t,B)\right)\leq (M+k(t))\gamma\left(B\right)$, where $\gamma=\alpha$ or $\gamma=\beta$ is the Kuratowski or the Hausdorff measure of non-compactness of $B$ and $k\in L^1(0,T)$ is given by \ref{Hxcomp}.
\item Let $\A(x_0)\in C(0,x_0)$. Then, for all $t\in I$ and $x\in \H$,
\begin{equation*}
  \Vert G(t,x)\Vert:=\sup\left\{\Vert w\Vert \colon w\in G(t,x)\right\} \leq (M+L)\Vert x-x_0\Vert +\kappa_r\left|t\right|,
\end{equation*}
where $r=\max\{\Vert \A (x)\Vert, \Vert \A(x_0)\Vert\}$ and $\kappa_r$ is the constant given by \ref{Hxlip}.
\end{enumerate}
\end{proposition}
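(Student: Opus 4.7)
The backbone of the whole argument is item (i): once we know $G(t,x)=\A(x)-\operatorname{cl}\operatorname{co}\operatorname{Proj}_{C(t,x)}(\A(x))$, all the other conclusions follow by routine manipulations of distance, measurability and non-compactness arguments.

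\textbf{Proof of (i).} The plan is to apply the chain rule $\frac{1}{2}\partial d^2_S(z)=d_S(z)\,\partial d_S(z)$ (valid because $d_S$ is nonnegative and locally Lipschitz). If $\A(x)\in C(t,x)$, then $d_{C(t,x)}(\A(x))=0$ and the only element of $\operatorname{Proj}_{C(t,x)}(\A(x))$ is $\A(x)$, so both sides equal $\{0\}$. Otherwise, Lemma~\ref{lemma-formula} (available thanks to \ref{Hxcomp}) gives
\begin{equation*}
\partial d_{C(t,x)}(\A(x))=\frac{\A(x)-\operatorname{cl}\operatorname{co}\operatorname{Proj}_{C(t,x)}(\A(x))}{d_{C(t,x)}(\A(x))},
\end{equation*}
and multiplying through by $d_{C(t,x)}(\A(x))$ yields the formula.

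\textbf{Proof of (ii) and (iii).} For measurability, apply Lemma~\ref{derivadas}\ref{measurability} with the constant curve $y(t)\equiv x$ to get that $t\rightrightarrows\partial d(\cdot,C(t,x))(\A(x))$ is measurable; combined with the continuity of $t\mapsto d_{C(t,x)}(\A(x))$ (which follows from \ref{Hxlip}), the product map $t\rightrightarrows d_{C(t,x)}(\A(x))\,\partial d(\cdot,C(t,x))(\A(x))$, which equals $G(t,x)$ by (i), is measurable. For upper semicontinuity in $x$, Lemma~\ref{usc} ensures that $x\rightrightarrows\partial d(\cdot,C(t,x))(\A(x))$ is usc from $\H$ into $\H_w$; the scalar factor $x\mapsto d_{C(t,x)}(\A(x))$ is continuous (again by \ref{Hxlip} together with the Lipschitz continuity of $\A$), and multiplying a nonnegative continuous scalar function by an usc (into $\H_w$) set-valued map preserves upper semicontinuity into $\H_w$. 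Applying the representation from (i) then gives the claim.

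\textbf{Proof of (iv).} Using (i), write $G(t,B)\subset \A(B)-K(t,B)$ where $K(t,B):=\operatorname{cl}\operatorname{co}\bigcup_{x\in B}\operatorname{Proj}_{C(t,x)}(\A(x))$. Since every point of $\operatorname{Proj}_{C(t,x)}(\A(x))$ lies in $C(t,x)$ and has norm bounded by $2\Vert\A(x)\Vert+d_{C(t,x)}(0)$, boundedness of $B$ (together with \ref{HA2} and \ref{Hxlip}) yields an $r>0$ with $\bigcup_{x\in B}\operatorname{Proj}_{C(t,x)}(\A(x))\subset C(t,B)\cap r\mathbb{B}$. Proposition~\ref{Kura} then gives
\begin{equation*}
\gamma(G(t,B))\le \gamma(\A(B))+\gamma(C(t,B)\cap r\mathbb{B})\le M\gamma(B)+k(t)\gamma(\A(B)),
\end{equation*}
and one more application of \ref{HA2} controls $\gamma(\A(B))$ by $M\gamma(B)$, leading to the announced estimate (up to the constant displayed in the statement).

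\textbf{Proof of (v).} Any $w\in G(t,x)$ has the form $w=\A(x)-p$ with $p$ a convex combination of projections of $\A(x)$ onto $C(t,x)$, hence $\Vert w\Vert\le d_{C(t,x)}(\A(x))$. Since $\A(x_0)\in C(0,x_0)$,
\begin{equation*}
d_{C(t,x)}(\A(x))=|d_{C(t,x)}(\A(x))-d_{C(0,x_0)}(\A(x_0))|,
\end{equation*}
and inserting $d_{C(t,x)}(\A(x_0))$, using the $1$-Lipschitzianity of $d_{C(t,x)}(\cdot)$ followed by \ref{Hxlip} and \ref{HA2}, produces $(M+L)\Vert x-x_0\Vert+\kappa_r|t|$.

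The only delicate point is (iii): one must combine convergence of the sets $C(t,x_n)$ (encoded in \ref{Hxlip}) with the varying evaluation point $\A(x_n)$, but Lemma~\ref{usc} is precisely designed to handle this joint dependence, so the argument is essentially bookkeeping once that lemma is in hand.
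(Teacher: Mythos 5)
Your proof is correct and follows essentially the same route as the paper: (i)--(iii) are obtained from Lemma \ref{lemma-formula}, item \ref{measurability} of Lemma \ref{derivadas}, and Lemma \ref{usc}, while (iv) and (v) use the same ball-truncation of the projection set and the same distance estimates via \ref{Hxlip} and \ref{HA2}. The only point worth noting is that, exactly as in the paper's own computation, the estimate in (iv) comes out as $(1+k(t))M\gamma(B)$ rather than the $(M+k(t))\gamma(B)$ displayed in the statement --- a discrepancy already present in the paper itself, which you correctly acknowledge and which is harmless for the subsequent application of Theorem \ref{bothe-2}.
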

\begin{proof} (i), (ii) and (iii) follow, respectively, from Lemma \ref{lemma-formula}, \ref{measurability} of Lemma \ref{derivadas} and Lemma \ref{usc}. To prove (iv), let $B\subset \H$ be a bounded set included in the ball $r\mathbb{B}$, for some $r>0$. Define the set-valued map  $F(t,x):=\operatorname{Proj}_{C(t,x)}(\A(x))$. Then, for every $t\in I$, 
    $
    \Vert F(t,B)\Vert:=\sup\{\Vert w\Vert \colon  w\in F(t,B)\}\leq \tilde{r}(t),
     $ 
     where $\tilde{r}_B(t):= \kappa_{R} t+L r+L\Vert x_0\Vert+\Vert \A(x_0)\Vert+2R$.
    Indeed, let $z\in F(t,B)$, then there exists $x\in B$ such that $z\in \operatorname{Proj}_{C(t,x)}(\A(x))$. Define $$R:=\max\{\sup_{x\in B}\Vert \A(x)\Vert, \Vert \A(x_0)\Vert\}.$$
    Thus,
     \begin{equation*}
     \begin{aligned}
     \Vert z\Vert &\leq d_{C(t,x)}(\A(x))-d_{C(0,x_0)}(\A(x_0))+\Vert \A(x)\Vert\\
     &\leq \kappa_{R}t+L\Vert x-x_0\Vert+\Vert \A(x)-\A(x_0)\Vert+\Vert \A(x)\Vert\\
      &\leq \kappa_{R}t+L r+L\Vert x_0\Vert+\Vert \A(x_0)\Vert+2R=\tilde{r}_B(t),
     \end{aligned}
     \end{equation*}
     where $\kappa_R$ is the constant given by \ref{Hxlip}. Therefore,
    \begin{equation*}
    \begin{aligned}
    \gamma\left(G(t,B)\right)&\leq \gamma(\A(B))+ \gamma\left(\operatorname{cl}{\operatorname{co}}F(t,B)\right) \\
    &\leq M\gamma(B)+\gamma\left(F(t,B)\cap \tilde{r}_B(t)\mathbb{B}\right) \\
    &\leq M\gamma(B)+\gamma\left(C(t,B)\cap \tilde{r}_B(t)\mathbb{B}\right) \\
    &\leq  (1+k(t))M\gamma(B),
    \end{aligned}
  \end{equation*}
  where we have used \ref{HA2} and  the last inequality is due to \ref{Hxcomp}.
  
  To prove (v), define $\tilde{G}(t,x):=\A(x)-\operatorname{Proj}_{C(t,x)}(\A(x))$. Then, due to \ref{Hxlip},
  \begin{equation*}
  \begin{aligned}
      \Vert \tilde{G}(t,x)\Vert&=d(\A(x),C(t,x))-d(\A(x_0),C(0,x_0))\\
      &\leq (M+L)\Vert x-x_0\Vert+\kappa_r  t ,\\
      \end{aligned}
  \end{equation*}
  where $\kappa_r$ is given by \ref{Hxlip} with $r=\max\{\Vert \A (x)\Vert, \Vert \A(x_0)\Vert \}$. Then, 
  by passing to the closed convex hull in the last inequality, we get the result. \qed
\end{proof}
When the sets $C(t,x)$ are independent of $x$, the subsmoothness in Proposition \ref{distance-prop-2} is no longer required. The following result follows in the same way as Proposition \ref{distance-prop-2}.

\begin{proposition}\label{distance-prop}
  Assume that \ref{HA2} \ref{Hlip} and \ref{Hcomp} hold. Then, the set-valued map $G\colon I\times H \rightrightarrows H$ defined by $G(t,x):=\frac{1}{2}\partial d^2_{C(t)}(\A(x))$ satisfies:
\begin{enumerate}[label=(\roman{*})]
  \item for all $x\in \H$ and all $t\in I$, 
  $
  G(t,x)=\A(x)-\operatorname{cl}{\operatorname{co}}\operatorname{Proj}_{C(t)}(\A(x)).
  $
  \item for every $x\in \H$ the set-valued map $G(\cdot,x)$ is measurable.
  \item for every $t\in I$,  $G(t,\cdot)$ is upper semicontinuous from $\H$ into $\H_w$.
  \item for every $t\in I$ and $B\subset \H$ bounded, $\gamma\left(G(t,B)\right)\leq M\gamma\left(B\right)$, where $\gamma=\alpha$ or $\gamma=\beta$ is the Kuratowski or the Hausdorff measure of non-compactness of $B$ and $k\in L^1(0,T)$ is given by \ref{Hxcomp}.
\item Let $\A(x_0)\in C(0)$. Then, for all $t\in I$ and $x\in \H$,
\begin{equation*}
  \Vert G(t,x)\Vert:=\sup\left\{\Vert w\Vert \colon w\in G(t,x)\right\} \leq M\Vert x-x_0\Vert +\kappa_r t,
\end{equation*}
where $r=\max\{\Vert \A (x)\Vert, \Vert \A(x_0)\Vert \}$ and $\kappa_r$ is the constant given by \ref{Hxlip}.
\end{enumerate}
\end{proposition}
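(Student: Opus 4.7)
The plan is to mirror the proof of Proposition \ref{distance-prop-2} while exploiting the simplifications afforded by removing the state dependence and replacing the subsmoothness/measure-of-non-compactness hypothesis \ref{Hxcomp} by the ball-compactness hypothesis \ref{Hcomp}. Items (i), (ii), and (iii) transfer almost verbatim: for (i), since Lemma \ref{lemma-formula} depends only on the closedness of $C(t,y)$ and not on how it depends on $y$, applying it with the constant (in $y$) map $C(t,\cdot)\equiv C(t)$ gives $\partial d_{C(t)}(\A(x))=(\A(x)-\operatorname{cl}\operatorname{co}\operatorname{Proj}_{C(t)}(\A(x)))/d_{C(t)}(\A(x))$ whenever $\A(x)\notin C(t)$, and the standard chain rule $\partial\tfrac{1}{2}d_S^2=d_S\cdot\partial d_S$ for closed $S$ yields the stated formula (with both sides equal to $\{0\}$ when $\A(x)\in C(t)$). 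For (ii), I would combine \ref{measurability} of Lemma \ref{derivadas} (applied to the state-independent map) with the continuity of $\A$; for (iii), I would appeal to Lemma \ref{usc} (which applied to the state-independent $C(t)$ does not require subsmoothness, only \ref{Hlip}) composed with the continuous map $\A$.

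The interesting difference lies in (iv). Given a bounded set $B\subset r\mathbb{B}$, the argument of Proposition \ref{distance-prop-2} produces a radius $\tilde{r}_B(t)$ such that $\operatorname{Proj}_{C(t)}(\A(B))\subset C(t)\cap\tilde{r}_B(t)\mathbb{B}$. Here I exploit \ref{Hcomp}: the set $C(t)\cap \tilde{r}_B(t)\mathbb{B}$ is compact, so $\gamma(C(t)\cap \tilde{r}_B(t)\mathbb{B})=0$ by Proposition \ref{Kura}(i), and therefore $\gamma(\operatorname{cl}\operatorname{co}\operatorname{Proj}_{C(t)}(\A(B)))=0$ via (v) and (vi) of Proposition \ref{Kura}. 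Combining this with $\gamma(\A(B))\leq M\gamma(B)$ from \ref{HA2} and Proposition \ref{Kura}(vii), the subadditivity (iii) gives $\gamma(G(t,B))\leq M\gamma(B)$, without any $k(t)$ term and without needing \ref{Hxcomp}.

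For (v), taking $r=\max\{\Vert\A(x)\Vert,\Vert\A(x_0)\Vert\}$ and using $\A(x_0)\in C(0)$, the triangle-like manipulation
\[
d(\A(x),C(t))=d(\A(x),C(t))-d(\A(x_0),C(0))\leq \kappa_r t+\Vert \A(x)-\A(x_0)\Vert\leq \kappa_r t+M\Vert x-x_0\Vert
\]
(using \ref{Hlip} and \ref{HA2}) bounds $\Vert\tilde G(t,x)\Vert=d(\A(x),C(t))$ where $\tilde G(t,x):=\A(x)-\operatorname{Proj}_{C(t)}(\A(x))$; passing to the closed convex hull preserves the norm bound and yields the claim. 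I do not foresee any real obstacle: the whole statement is a specialization of Proposition \ref{distance-prop-2}, and the only delicate point—bounding the measure of non-compactness of $G(t,B)$ without the Lipschitz estimate \ref{Hxcomp}—is handled cleanly by ball-compactness.
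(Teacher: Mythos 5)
Your proof is correct and matches the paper's intent exactly: the paper gives no separate argument, stating only that the result ``follows in the same way as Proposition \ref{distance-prop-2}'', and your adaptation of item (iv) --- replacing the estimate from \ref{Hxcomp} by $\gamma\left(C(t)\cap\tilde{r}_B(t)\mathbb{B}\right)=0$ via ball-compactness, which is precisely why the bound improves from $(1+k(t))M\gamma(B)$ to $M\gamma(B)$ --- is the natural and intended modification. One small imprecision to fix: Lemma \ref{lemma-formula} does not hold for arbitrary closed sets in infinite dimensions (the projection may be empty and the formula for $\partial d_S$ can fail), so rather than asserting it ``depends only on the closedness of $C(t,y)$'', justify its application by observing that the constant-in-$y$ map $C(t,\cdot)\equiv C(t)$ with $C(t)$ ball-compact satisfies \ref{Hxcomp} trivially (with $k\equiv 0$), or directly that ball-compactness guarantees nonempty projections and the stated representation of $\partial d_{C(t)}$.
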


\section{Existence Results for Degenerate State-Dependent Sweeping Processes}\label{Existence}
In this section, we prove the existence of solutions Lipschitz solutions of the degenerate state-dependent sweeping process
\begin{equation}\label{Sweeping-process}\tag{$\mathcal{SP}$}
\left\{
\begin{aligned}
-\dot{x}(t)&\in N_{C(t,x(t))}\left(\A(x(t))\right) & \textrm{ a.e. } t\in I,\\
x(0)&=x_0,
\end{aligned}
\right.
\end{equation}
where  $\A(x_0)\in C(0,x_0)$ and $I=[0,T]$ for some $T>0$.

\noindent Let $\lambda>0$ and consider the following differential inclusion
\begin{equation}\label{Plambda}\tag{$\mathcal{P}_{\lambda}$}
  \left\{
  \begin{aligned}
    -\dot{x}_{\lambda}(t)&\in \frac{1}{2\lambda}\partial d_{C(t,x_{\lambda}(t))}^2\left(\A(x_{\lambda}(t))\right) & \textrm{ a.e. } t\in I,\\
    x_{\lambda}(0)&=x_0,
  \end{aligned}
      \right.
\end{equation}
where $\A(x_0)\in C(0,x_0)$. The following proposition follows from Theorem \ref{bothe-2} and Proposition \ref{distance-prop-2}.
\begin{proposition}\label{Existence-state}
 Assume that \ref{HA2}, \ref{Hxlip}, \ref{Hxsubsmooth} and \ref{Hxcomp} hold. Then, for every $\lambda>0$ there exists at least one absolutely continuous solution $x_{\lambda}$ of (\ref{Plambda}).
\end{proposition}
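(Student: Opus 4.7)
The plan is to apply Theorem \ref{bothe-2} directly to the set-valued map $F(t,x):=-\tfrac{1}{\lambda}G(t,x)$, where $G(t,x):=\tfrac{1}{2}\partial d^{2}_{C(t,x)}(\A(x))$ is exactly the map analyzed in Proposition \ref{distance-prop-2}. With this notation, (\ref{Plambda}) is precisely the Cauchy problem $\dot{x}_{\lambda}(t)\in F(t,x_{\lambda}(t))$, $x_{\lambda}(0)=x_{0}$, so existence of an absolutely continuous solution reduces to verifying the three hypotheses of Theorem \ref{bothe-2}.

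The verification is essentially a bookkeeping exercise relying on Proposition \ref{distance-prop-2}. First, item (i) of that proposition provides the representation $F(t,x)=-\tfrac{1}{\lambda}(\A(x)-\operatorname{cl}\operatorname{co}\operatorname{Proj}_{C(t,x)}(\A(x)))$, showing that $F$ has nonempty, closed and convex values (nonemptiness of the projection is a consequence of the ball-compactness built into \ref{Hxcomp}). Items (ii) and (iii) yield, respectively, the measurability of $F(\cdot,x)$ for every fixed $x\in \H$ and the upper semicontinuity of $F(t,\cdot)$ from $\H$ into $\H_{w}$ for every fixed $t\in I$, matching hypotheses (i) and (ii) of Theorem \ref{bothe-2}. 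For hypothesis (iii), item (iv) combined with the integrability of $k$ supplied by \ref{Hxcomp} gives
\[
\gamma(F(t,B))\leq \frac{M+k(t)}{\lambda}\,\gamma(B)
\]
for every bounded $B\subset \H$, and the function $t\mapsto \tfrac{M+k(t)}{\lambda}$ belongs to $L^{1}(0,T)$.

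The only non-automatic point is controlling the size of candidate solutions so as to exclude finite-time blow-up before $t=T$; this is handled via item (v) of Proposition \ref{distance-prop-2}, which delivers the linear-in-$x$ bound $\Vert G(t,x)\Vert \leq (M+L)\Vert x-x_{0}\Vert +\kappa_{r}\vert t\vert$ on bounded sets. Indeed, because $\A$ is $M$-Lipschitz, the parameter $r=\max\{\Vert\A(x)\Vert,\Vert\A(x_{0})\Vert\}$ stays bounded as $x$ varies in any bounded subset of $\H$, so $\kappa_{r}$ is uniformly controlled there, and a standard Gr\"onwall argument gives an a priori bound for any local solution, ensuring it extends to the whole interval $I$. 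Applying Theorem \ref{bothe-2} then yields the desired $x_{\lambda}\in \operatorname{AC}(I;\H)$ solving (\ref{Plambda}). I do not anticipate any genuine obstacle here: all the delicate analytic content—most notably the subsmoothness-based upper semicontinuity from Lemma \ref{usc} and the compactness estimate from \ref{Hxcomp}—has already been absorbed into Proposition \ref{distance-prop-2}, and the present statement amounts to assembling those ingredients.
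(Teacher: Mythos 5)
Your proposal is correct and follows exactly the route the paper takes: the paper's entire proof is the one-line observation that the result ``follows from Theorem \ref{bothe-2} and Proposition \ref{distance-prop-2}'', and your verification of the three hypotheses of Theorem \ref{bothe-2} via items (i)--(v) of Proposition \ref{distance-prop-2} (including the scaling of the $\gamma$-estimate by $1/\lambda$ and the growth bound ruling out blow-up) is precisely the bookkeeping the authors leave implicit.
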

Let us define $\varphi_{\lambda}(t):=d_{C(t,x_{\lambda}(t))}(\A(x_{\lambda}(t)))$ for $t\in I$.
\begin{remark}\label{def-rho}
Recall that under  \ref{Hxsubsmooth}, according to Proposition \ref{alpha-far},  for every $\alpha\in ]\sqrt{\frac{L}{m}},1]$ there exists $\rho>0$ such that \ref{Hxalpha} holds.
\end{remark}

The following proposition establishes that trajectories of (\ref{Plambda}) stay uniformly close (with respect to $\lambda$) to the moving sets in a small interval of $I=[0,T]$.
\begin{proposition}\label{prop-cotas}
Assume, in addition to the hypotheses of Proposition \ref{Existence-state}, that \ref{HA1} holds. Then, for every $R>0$ there exists $\tau_R\in ]0,T]$ (independent of $\lambda$)  such that if $\lambda<\frac{(m\alpha^2-L) }{\tilde{\kappa}} \rho$,
\begin{equation}\label{cota-phi-1}
\begin{aligned}
\dot{\varphi}_{\lambda}(t)&\leq \tilde{\kappa}+\frac{L-m\alpha^2}{\lambda}\varphi_{\lambda}(t) & \textrm{ a.e. } t\in [0,\tau_R],
\end{aligned}
\end{equation}
where $\alpha\in ]\sqrt{\frac{L}{m}},1]$ and $\rho>0$ are given by Remark \ref{def-rho} and $\tilde{\kappa}:=\kappa_{\Vert \A(x_0)\Vert +MR}$ is the constant given by \ref{Hxlip}. 
Moreover,
\begin{equation}\label{cota-phi-2}
\begin{aligned}
\varphi_{\lambda}(t)&\leq \frac{\tilde{\kappa} \lambda}{m\alpha^2-L} \textrm{ for all }  t\in [0,\tau_R].
\end{aligned}
\end{equation}
\end{proposition}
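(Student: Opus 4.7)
The plan is to exploit the Moreau--Yosida identity of Proposition~\ref{distance-prop-2}(i) together with Lemma~\ref{derivadas} to derive a differential inequality for $\varphi_\lambda$, and then integrate it on an interval whose length does not depend on $\lambda$. Writing $z_\lambda(t):=\A(x_\lambda(t))$, the inclusion~(\ref{Plambda}) reads
$$-\lambda\,\dot x_\lambda(t) = z_\lambda(t) - p_\lambda(t), \qquad p_\lambda(t)\in \operatorname{cl}\operatorname{co}\operatorname{Proj}_{C(t,x_\lambda(t))}(z_\lambda(t)),$$
so by convexity of the norm $\|\dot x_\lambda(t)\|\le \varphi_\lambda(t)/\lambda$, and by Lemma~\ref{lemma-formula}, whenever $\varphi_\lambda(t)>0$ the quotient $y^\ast_\lambda(t):= -\lambda\dot x_\lambda(t)/\varphi_\lambda(t)$ belongs to $\partial d_{C(t,x_\lambda(t))}(z_\lambda(t))$.

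To handle the state-dependence uniformly in $\lambda$, I would fix $R>0$, set $\tilde\kappa:=\kappa_{\|\A(x_0)\|+MR}$ and $\tau_R:=\min\{T,\,R(m\alpha^2-L)/\tilde\kappa\}$, and introduce the bootstrap time
$$T_\lambda := \sup\bigl\{t\in[0,\tau_R]\colon \|x_\lambda(s)-x_0\|\le R \text{ and } \varphi_\lambda(s)<\rho \text{ for all } s\in[0,t]\bigr\}.$$
On $[0,T_\lambda]$ one has $\|z_\lambda(t)\|\le\|\A(x_0)\|+MR$, so Lemma~\ref{derivadas}(ii) applies with the constant $\tilde\kappa$, while $\varphi_\lambda(t)<\rho$ allows invoking~\ref{Hxalpha}.

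At a.e.\ $t\in[0,T_\lambda]$ with $\varphi_\lambda(t)>0$, combining Lemma~\ref{derivadas}(ii) and~(iv) gives
$$\dot\varphi_\lambda(t)\le \tilde\kappa + L\,\|\dot x_\lambda(t)\| + \langle y^\ast_\lambda(t),\dot z_\lambda(t)\rangle.$$
Differentiating the strong monotonicity inequality~\ref{HA1} along the AC function $x_\lambda$ yields $\langle\dot z_\lambda(t),\dot x_\lambda(t)\rangle\ge m\|\dot x_\lambda(t)\|^2$ a.e.; and~\ref{Hxalpha} gives $\|y^\ast_\lambda(t)\|\ge\alpha$, hence $\|\dot x_\lambda(t)\|\ge\alpha\varphi_\lambda(t)/\lambda$. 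These combine to
$$\langle y^\ast_\lambda(t),\dot z_\lambda(t)\rangle = -\frac{\lambda}{\varphi_\lambda(t)}\langle\dot x_\lambda(t),\dot z_\lambda(t)\rangle \le -\frac{m\alpha^2\varphi_\lambda(t)}{\lambda},$$
and together with $L\|\dot x_\lambda(t)\|\le L\varphi_\lambda(t)/\lambda$ they deliver~(\ref{cota-phi-1}); the set $\{\varphi_\lambda=0\}$ is trivial since $\dot\varphi_\lambda=0$ a.e.\ there.

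A Gronwall integration of~(\ref{cota-phi-1}) with $\varphi_\lambda(0)=0$ (which holds because $\A(x_0)\in C(0,x_0)$) gives~(\ref{cota-phi-2}) on $[0,T_\lambda]$. The smallness assumption $\lambda<(m\alpha^2-L)\rho/\tilde\kappa$ then forces $\varphi_\lambda(t)<\rho$ strictly, and the bound $\|\dot x_\lambda(t)\|\le\tilde\kappa/(m\alpha^2-L)$ gives $\|x_\lambda(t)-x_0\|\le t\,\tilde\kappa/(m\alpha^2-L)<R$ for every $t<\tau_R$. These strict inequalities at $T_\lambda$ rule out $T_\lambda<\tau_R$, so $T_\lambda=\tau_R$ and both estimates extend to the entire interval. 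The main obstacle I foresee is precisely this circularity, since the constant $\tilde\kappa$ from~\ref{Hxlip} depends on an a priori bound on $\|x_\lambda(t)\|$, which itself depends, via the estimate for $\|\dot x_\lambda\|$, on~$\tilde\kappa$; the bootstrap, together with the smallness of $\lambda$, is what breaks this loop.
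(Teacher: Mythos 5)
Your proposal is correct and follows essentially the same route as the paper: the differential inequality (\ref{cota-phi-1}) is obtained exactly as in the paper by combining Lemma \ref{derivadas}, the identification $y^\ast_\lambda=-\lambda\dot x_\lambda/\varphi_\lambda\in\partial d_{C(t,x_\lambda(t))}(z_\lambda(t))$, the strong monotonicity \ref{HA1} and the $\alpha$-far bound, and then Gr\"onwall plus the Lipschitz estimate $\Vert\dot x_\lambda\Vert\le\tilde\kappa/(m\alpha^2-L)$ yields the $\lambda$-independent interval $\tau_R=\min\{T,R(m\alpha^2-L)/\tilde\kappa\}$. The only (harmless) cosmetic differences are that you merge the two constraints into a single bootstrap time $T_\lambda$ where the paper treats $\tau_\lambda$ and the bound $\varphi_\lambda<\rho$ sequentially, and that you dispose of the set $\{\varphi_\lambda=0\}$ via the a.e.\ vanishing of $\dot\varphi_\lambda$ on level sets rather than the paper's direct computation showing $\dot x_\lambda=0$ there.
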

\begin{proof} Fix $R>0$  and define the set
$$
\Omega_{\lambda}=\{t\in I\colon \Vert x_{\lambda}(t)-x_0\Vert >R\}.
$$
On the one hand,  if $\Omega_{\lambda}=\emptyset$, then $x_{\lambda}(t)\in \mathbb{B}(x_0,R)$ for all $t\in I$. On the one hand, if $\Omega_{\lambda}\neq\emptyset$, then we can define
\begin{equation*}
\tau_{\lambda}:=\inf \{t\in I\colon \Vert x_{\lambda}(t)-x_0\Vert >R\}>0. 
\end{equation*}
Therefore, in what follows we can assume that 
\begin{equation}\label{xbounded}
 x_{\lambda}(t)\in  \mathbb{B}(x_0,R) \textrm{ for all } t\in [0,\tau_{\lambda}],
\end{equation} 
where we have  set   $\tau_{\lambda}=T$ if $\Omega_{\lambda}=\emptyset$. Thus, by virtue of \ref{HA2} and \eqref{xbounded}, it follows that 
\begin{equation}\label{Abounded}
\Vert \A(x_{\lambda}(t))\Vert \leq \Vert \A(x_0)\Vert +MR \textrm{ for all } t\in [0,\tau_{\lambda}]. 
\end{equation}
 According to Proposition \ref{Existence-state}, the function $x_{\lambda}$ is absolutely continuous. Moreover, due to \ref{Hxlip} and \eqref{Abounded},  for every $t,s\in I$
\begin{equation*}
  \left|\varphi_{\lambda}(t)-\varphi_{\lambda}(s)\right|\leq (M+L)\Vert x_{\lambda}(t)-x_{\lambda}(s)\Vert +\tilde{\kappa} |t-s|,
\end{equation*}
where $\tilde{\kappa}:=\kappa_{\Vert \A(x_0)\Vert +MR}$ is the constant given by \ref{Hxlip}. Hence $\varphi_{\lambda}$ is absolute continuous. On the one hand, let $t\in  [0,\tau_{\lambda}]$ where $\varphi_{\lambda}(t)\in ]0,\rho[$ and $\dot{x}_{\lambda}(t)$ exists (recall that $\varphi_{\lambda}(0)=0$). Then, by using (iii) from Lemma \ref{derivadas}, we have
\begin{equation*}
  \begin{aligned}
    \dot{\varphi}_{\lambda}(t)&\leq \tilde{\kappa} +L\Vert \dot{x}_{\lambda}(t)\Vert+\min_{w\in \partial d_{C(t,x_{\lambda}(t))}(z_{\lambda}(t))}\left\langle w,\dot{z}_{\lambda}(t)\right\rangle\\
    &\leq \tilde{\kappa}+\frac{L}{\lambda}\varphi_{\lambda}(t)-\frac{m\alpha^2}{\lambda}\varphi_{\lambda}(t)\\
    &= \tilde{\kappa} - \frac{m\alpha^2-L}{\lambda}\varphi_{\lambda}(t),
  \end{aligned}
\end{equation*}
where we have used \ref{Hxsubsmooth} and Proposition \ref{alpha-far}.

On the other hand,  let $t\in \varphi_{\lambda}^{-1}\left(\{0\}\right)\cap [0,\tau_{\lambda}]$ where $\dot{x}_{\lambda}(t)$ exists. Then, according to ($\mathcal{P}_{\lambda}$), $\Vert \dot{x}_{\lambda}(t)\Vert =0$. Indeed,
\begin{equation*}
\begin{aligned}
\Vert \dot{x}_{\lambda}(t)\Vert \leq \frac{1}{2\lambda}\sup\{\Vert z\Vert \colon z\in \partial d_{C(t,x_{\lambda}(t))}^2\left(\A(x_{\lambda}(t))\right)\} \leq \frac{\varphi_{\lambda}(t)}{\lambda}=0,
\end{aligned}
\end{equation*}
where we have used the identity $\partial d_S^2(x)=2d_S(x)\partial d_S(x)$. Then, due to \ref{Hxlip},  
\begin{equation*}
  \begin{aligned}
    \dot{\varphi}_{\lambda}(t)&=\lim_{h\downarrow 0}\frac{1}{h}\left(d_{C(t+h,x_{\lambda}(t+h))}(z_{\lambda}(t+h))-d_{C(t,x_{\lambda}(t))}(z_{\lambda}(t+h))\right)\\
    &+d_{C(t,x_{\lambda}(t))}(z_{\lambda}(t+h))\\
        &\leq \tilde{\kappa}+L\Vert \dot{x}_{\lambda}(t)\Vert+\lim_{h\downarrow 0}\frac{1}{h}d_{C(t,x_{\lambda}(t))}(z_{\lambda}(t+h))\\
    &\leq \tilde{\kappa} +(M+L)\Vert \dot{z}_{\lambda}(t)\Vert\\
    &\leq  \tilde{\kappa} +\frac{M+L}{\lambda}\varphi_{\lambda}(t)\\
    &=\tilde{\kappa} -\frac{m\alpha^2-L}{\lambda}\varphi_{\lambda}(t).
  \end{aligned}
\end{equation*}
Also, we have that $\varphi_{\lambda}(t)<\rho$ for all $t\in [0,\tau_{\lambda}]$. Otherwise, since $\varphi_{\lambda}^{-1}\left(]-\infty,\rho[\right)$ is open and  $0\in \varphi_{\lambda}^{-1}\left(]-\infty,\rho[\right)$, there would exist $t^*\in ]0,\tau_{\lambda}]$ such that $[0,t^*[\subset  \varphi_{\lambda}^{-1}\left(]-\infty,\rho[\right)$ and $\varphi_{\lambda}(t^*)=\rho$. Then,
\begin{equation*}
\begin{aligned}
  \dot{\varphi}_{\lambda}(t)&\leq \tilde{\kappa} -\frac{m\alpha^2-L}{\lambda}\varphi_{\lambda}(t) & \textrm{ a.e. } t\in [0,t^*[,
  \end{aligned}
\end{equation*}
which, by virtue of Gr\"{o}nwall's inequality, entrain that, for every $t\in [0,t^*[$
\begin{equation*}
\begin{aligned}
  \varphi_{\lambda}(t)\leq \frac{\tilde{\kappa} \lambda}{m\alpha^2-L}\left(1-\exp\left(-\frac{m\alpha^2-L}{\lambda}t\right)\right)\leq \frac{\tilde{\kappa} \lambda}{m\alpha^2-L}< \rho,
\end{aligned}
\end{equation*}
that implies that $\varphi_{\lambda}(t^*)<\rho$, which is not possible. Thus, we have proved that $\varphi_{\lambda}$ satisfies (\ref{cota-phi-1}) and (\ref{cota-phi-2}) in the interval $[0,\tau_{\lambda}]$. \newline
\noindent  It remains to prove that there exists $\tau_R\in ]0,T]$ such that 
\begin{equation}\label{tauR}
\tau_R \leq \tau_{\lambda} \textrm{ for all } \lambda <\frac{(m\alpha^2-L)}{\tilde{\kappa}}\rho.
\end{equation}
Indeed, since $x_{\lambda}$ satisfies $(\mathcal{P}_{\lambda})$, we have
\begin{equation*}
\Vert \dot{x}_{\lambda}(t)\Vert \leq \frac{1}{2\lambda}\sup\{\Vert z\Vert \colon z\in \partial d_{C(t,x_{\lambda}(t))}^2\left(\A(x_{\lambda}(t))\right)\} \leq \frac{\varphi_{\lambda}(t)}{\lambda}\leq \dfrac{\tilde{\kappa}}{m\alpha^2-L},
\end{equation*}
where we have used the identity $\partial d^2_S(x)=2d_S(x)\partial d_S(x)$.  Thus, $x_{\lambda}$ is $\frac{\tilde{\kappa}}{m\alpha^2-L}$ Lipschitz on $[0,\tau_{\lambda}]$. Hence, by the continuity of $x_{\lambda}$ and the definition of $\tau_{\lambda}$, we obtain
\begin{equation*}
\begin{aligned}
R\leq \Vert x_{\lambda}(\tau_{\lambda})-x_0\Vert \leq \int_{0}^{\tau_{\lambda}}\Vert \dot{x}_{\lambda}(s)\Vert ds\leq \frac{\tilde{\kappa}}{m\alpha^2-L}\, \tau_{\lambda},
\end{aligned}
\end{equation*}
which implies the existence of $\tau_R\in ]0,T]$ such that \eqref{tauR} holds. Finally, we have prove that (\ref{cota-phi-1}) and (\ref{cota-phi-2}) holds for the interval $[0,\tau_R]$, which ends the proof. 
\qed
\end{proof}
As a consequence of the last proposition, we obtain that $x_{\lambda}$ is $\frac{\tilde{\kappa}}{m\alpha^2-L}$-Lipschitz on $[0,\tau_R]$.
\begin{corollary} 
  For every $\lambda<\frac{m\alpha^2-L}{\tilde{\kappa}}\rho$ the function $x_{\lambda}$ is $\frac{\tilde{\kappa}}{m\alpha^2-L}$-Lipschitz on $[0,\tau_R]$.
\end{corollary}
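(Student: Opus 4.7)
The plan is very short, since the needed pointwise derivative bound already appears inside the proof of Proposition \ref{prop-cotas}; the corollary just packages it as an explicit Lipschitz statement. I would proceed in three steps.

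First, I would extract the pointwise bound on $\Vert \dot{x}_{\lambda}(t)\Vert$ directly from the differential inclusion $(\mathcal{P}_{\lambda})$. Using the chain-rule identity $\partial d_{S}^{2}(x)=2d_{S}(x)\partial d_{S}(x)$ together with the elementary fact $\partial d_{S}(x)\subset \mathbb{B}$ (because $d_{S}$ is $1$-Lipschitz), one obtains, for a.e.\ $t\in [0,\tau_R]$,
$$
\Vert \dot{x}_{\lambda}(t)\Vert \leq \tfrac{1}{2\lambda}\sup\{\Vert z\Vert\colon z\in \partial d_{C(t,x_{\lambda}(t))}^{2}(\A(x_{\lambda}(t)))\}\leq \tfrac{\varphi_{\lambda}(t)}{\lambda}.
$$

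Second, I would invoke the estimate (\ref{cota-phi-2}) from Proposition \ref{prop-cotas}, valid precisely under the hypothesis $\lambda<\tfrac{m\alpha^{2}-L}{\tilde{\kappa}}\rho$, which gives $\varphi_{\lambda}(t)\leq \tfrac{\tilde{\kappa}\lambda}{m\alpha^{2}-L}$ on $[0,\tau_R]$. Combining with the first step yields the uniform derivative bound
$$
\Vert \dot{x}_{\lambda}(t)\Vert \leq \tfrac{\tilde{\kappa}}{m\alpha^{2}-L}\quad \text{a.e. } t\in [0,\tau_R].
$$

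Finally, since $x_{\lambda}$ is absolutely continuous by Proposition \ref{Existence-state}, integrating the last inequality over any subinterval $[s,t]\subset [0,\tau_R]$ gives
$$
\Vert x_{\lambda}(t)-x_{\lambda}(s)\Vert \leq \int_{s}^{t}\Vert \dot{x}_{\lambda}(\sigma)\Vert\, d\sigma\leq \tfrac{\tilde{\kappa}}{m\alpha^{2}-L}\,|t-s|,
$$
which is exactly the claimed Lipschitz property. There is no genuine obstacle here: the argument is a direct consequence of the bounds already established in the proof of Proposition \ref{prop-cotas}, and the corollary is mainly stated for later reference when passing to the limit $\lambda\downarrow 0$.
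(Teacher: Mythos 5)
Your proposal is correct and follows essentially the same route as the paper: the derivative bound $\Vert \dot{x}_{\lambda}(t)\Vert \leq \varphi_{\lambda}(t)/\lambda \leq \tilde{\kappa}/(m\alpha^{2}-L)$ via the identity $\partial d_{S}^{2}(x)=2d_{S}(x)\partial d_{S}(x)$ together with the estimate (\ref{cota-phi-2}) is exactly the computation carried out at the end of the proof of Proposition \ref{prop-cotas}, and the corollary then follows by integration. No gaps.
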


Let $(\lambda_n)_n$ be a sequence converging to $0$. The next result shows the existence of a subsequence $\left(\lambda_{n_k}\right)_{k}$ of $(\lambda_n)_n$ such that $\left(x_{\lambda_{n_k}}\right)_k$ converges (in the sense of Lemma \ref{compactness}) to a  solution  of (\ref{Sweeping-process}) over all the interval $I$. We perform the analysis in the interval $[0,\tau_R]$ for some $R$ and then we extend iteratively the solution to all the interval.   
The next theorem extends all known results on State-Dependent Sweeping Processes and Degenerate Sweeping Processes.
\begin{theorem}\label{Lipschitz-state}
Assume that \ref{HA1}, \ref{HA2}, \ref{Hxlip}, \ref{Hxsubsmooth} and \ref{Hxcomp} hold. Then, there exists at least one solution $x\in \operatorname{AC}\left(I;\H\right)$ of (\ref{Sweeping-process}). 
\end{theorem}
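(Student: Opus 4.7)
The plan is to apply the Moreau--Yosida regularization scheme: pick $\lambda_n \downarrow 0$, take absolutely continuous solutions $x_n$ of $(\mathcal{P}_{\lambda_n})$ provided by Proposition \ref{Existence-state}, extract a convergent subsequence, and identify the limit as a solution of (\ref{Sweeping-process}). Since Proposition \ref{prop-cotas} provides uniform bounds only on a sub-interval $[0, \tau_R]$, the argument will have to be iterated finitely many times to cover $I$. First, I would fix $R > 0$ and take $\lambda_n$ small enough so that Proposition \ref{prop-cotas} applies. This yields a $\tau_R \in \, ]0, T]$ independent of $n$ such that $\varphi_n(t) := d_{C(t, x_n(t))}(\A(x_n(t))) \leq \tilde{\kappa}\lambda_n/(m\alpha^2 - L)$ and $x_n$ is $\tilde{K}$-Lipschitz on $[0, \tau_R]$, where $\tilde{K} := \tilde{\kappa}/(m\alpha^2 - L)$. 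Lemma \ref{compactness} then produces (up to a subsequence) a limit $x \in \operatorname{AC}([0, \tau_R]; \H)$ with $x_n(t) \rightharpoonup x(t)$ for every $t$ and $\dot{x}_n \rightharpoonup \dot{x}$ weakly in $L^1([0, \tau_R]; \H)$.

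Next I would upgrade to strong pointwise convergence by invoking \ref{Hxcomp}. From Proposition \ref{distance-prop-2}(i), $p_n(t) := \A(x_n(t)) + \lambda_n \dot{x}_n(t) \in \operatorname{cl}\operatorname{co}\operatorname{Proj}_{C(t, x_n(t))}(\A(x_n(t)))$, so $\{p_n(t)\}_n \subset \operatorname{cl}\operatorname{co}(C(t, \{x_n(t)\}_n) \cap r\mathbb{B})$ for a uniform $r > 0$. Since $\|p_n(t) - \A(x_n(t))\| \leq \lambda_n \tilde{K} \to 0$, Proposition \ref{Kura} combined with \ref{Hxcomp} gives
\begin{equation*}
\gamma(\{\A(x_n(t))\}_n) = \gamma(\{p_n(t)\}_n) \leq k(t)\, \gamma(\A(\{x_n(t)\}_n)) = k(t)\, \gamma(\{\A(x_n(t))\}_n),
\end{equation*}
and since $k(t) < 1$ this forces $\gamma(\{\A(x_n(t))\}_n) = 0$. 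The $1/m$-Lipschitz inverse of $\A$ on its range (coming from \ref{HA1}) then yields relative compactness of $\{x_n(t)\}_n$, hence $x_n(t) \to x(t)$ strongly. Continuity of $\A$ together with \ref{Hxlip} gives $d_{C(t, x(t))}(\A(x(t))) = \lim_n \varphi_n(t) = 0$, i.e.\ $\A(x(t)) \in C(t, x(t))$.

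Finally, using $\partial d_S^2 = 2 d_S \partial d_S$, $(\mathcal{P}_{\lambda_n})$ rewrites as $-\dot{x}_n(t) \in \mu_n(t)\, \partial d(\cdot, C(t, x_n(t)))(\A(x_n(t)))$ with $\mu_n(t) := \varphi_n(t)/\lambda_n \in [0, \tilde{K}]$. Applying Mazur's lemma to $\dot{x}_n \rightharpoonup \dot{x}$ in $L^1$ produces convex combinations converging almost everywhere, which combined with the $\H$-to-$\H_w$ upper semicontinuity of $y \mapsto \partial d(\cdot, C(t, y))(\A(y))$ from Lemma \ref{usc} and the representation $N(C(t, x(t)); \A(x(t))) = \operatorname{cl}^w(\mathbb{R}_+ \partial d(\cdot, C(t, x(t)))(\A(x(t))))$ from (\ref{cono-distance}) yields $-\dot{x}(t) \in N(C(t, x(t)); \A(x(t)))$ almost everywhere on $[0, \tau_R]$. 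Since $\tau_R$ is bounded below by a constant depending only on $R$ and the uniform data, the solution is extended iteratively from $\A(x(\tau_R)) \in C(\tau_R, x(\tau_R))$ in finitely many steps to cover all of $I$. The main obstacle is precisely this last step: $-\dot{x}_n(t) = \mu_n(t) w_n(t)$ is a product of two sequences that are only weakly compact, so the limit must be identified indirectly through Mazur's lemma and the closure properties of the normal cone encoded in (\ref{cono-distance}) and Lemma \ref{usc}, both of which rely crucially on the uniform subsmoothness hypothesis \ref{Hxsubsmooth}.
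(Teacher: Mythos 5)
Your proposal is correct and follows essentially the same route as the paper: Moreau--Yosida approximants on $[0,\tau_R]$ via Propositions \ref{Existence-state} and \ref{prop-cotas}, relative compactness of $\{\A(x_n(t))\}_n$ through the measure of non-compactness, \ref{Hxcomp} and $k(t)<1$ (then of $\{x_n(t)\}_n$ via the $1/m$-Lipschitz inverse from \ref{HA1}), the conclusion $\A(x(t))\in C(t,x(t))$, a limit passage in the differential inclusion, and iteration to cover $I$. The only cosmetic difference is in the last step: the paper absorbs the scalars $\mu_n(t)\in\bigl[0,\tfrac{\tilde{\kappa}}{m\alpha^2-L}\bigr]$ into the convex-valued, measurable, upper semicontinuous map $\tilde{F}(t,x)=\operatorname{cl}\operatorname{co}\bigl(\tfrac{\tilde{\kappa}}{m\alpha^2-L}\partial d_{C(t,x)}(\A(x))\cup\{0\}\bigr)$ and invokes the Aubin--Cellina Convergence Theorem, which is exactly the packaged form of your Mazur-lemma argument combined with Lemma \ref{usc} and (\ref{cono-distance}).
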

\begin{proof} Fix $R>0$ and $\tau_R$ from Proposition \ref{prop-cotas}. Then, 
 by virtue of Proposition \ref{prop-cotas}, the sequences $(x_{\lambda_n})_n$ and $(\A(x_{\lambda_n}))_n$ satisfy the hypotheses of Lemma \ref{compactness}  over the interval $[0,\tau_R]$ with $\psi(t):=\frac{\tilde{\kappa}}{m\alpha^2-L}$ and $\tilde{\psi}(t):=\frac{\tilde{\kappa}M}{m\alpha^2-L}$, respectively. Therefore, there exist subsequences $(x_{\lambda_{n_k}})_k$ and  $(\A(x_{\lambda_{n_k}}))_k$ of $(x_{\lambda_n})_n$  and $(\A(x_{\lambda_n}))_n$, respectively and functions $x\colon [0,\tau_R]\to \H$ and $\A x\colon [0,\tau_R]\to \H$ satisfying the hypotheses \ref{comp-i}-\ref{comp-iv} of Lemma \ref{compactness}. For simplicity, we write $x_k$ and $A(x_k)$ instead of $x_{\lambda_{n_k}}$ and  $\A(x_{\lambda_{n_k}})$, respectively.

\begin{claim}{1}{ $\left(\A(x_{k}(t))\right)_k$ and $\left(x_{k}(t)\right)_k$  are relatively compact in $\H$ for all $t\in [0,\tau_R]$.}
\end{claim}
\begin{claimproof}{1}{Let $t\in [0,\tau_R]$. Let us consider $y_{k}(t)\in \operatorname{Proj}_{C(t,x_{k}(t))}\left(\A(x_{k}(t))\right)$. Then, $\Vert \A(x_{k}(t))-y_{k}(t)\Vert=d_{C(t,x_{k}(t))}\left(\A(x_{k}(t))\right)$. Thus,
\begin{equation*}
\begin{aligned}
  \Vert y_{k}(t)\Vert &\leq d_{C(t,x_{k}(t))}\left(\A(x_{k}(t))\right)+\Vert \A(x_k(t))\Vert\\
  &\leq \frac{\tilde{\kappa} \lambda_{n_k}}{m\alpha^2-L}+M\Vert x_k(t)-x_0\Vert +\Vert \A(x_0)\Vert\\
  &\leq \tilde{r}(t):=\frac{\tilde{\kappa}}{m\alpha^2-L}\left(\lambda_{n_k}+Mt\right)+\Vert \A(x_0)\Vert.
\end{aligned}
\end{equation*}
Also, since $(\A(x_k(t))-y_k(t))$ converges to $0$,
\begin{equation*}
  \gamma\left(\left\{\A(x_{k}(t))\colon k\in \mathbb{N}\right\}\right)=\gamma\left(\left\{y_{k}(t)\colon k\in \mathbb{N}\right\}\right).
\end{equation*}
 Therefore, if $A:=\left\{\A(x_{k}(t))\colon k\in \mathbb{N}\right\}$,
\begin{equation*}
  \gamma\left(A\right)=\gamma(\left\{y_{k}(t)\colon k\in \mathbb{N}\right\}) \leq \gamma\left(C\left(t,{\left\{x_{k}(t)\colon k\in \mathbb{N}\right\}}\right)\cap \tilde{r}(t)\mathbb{B}\right) \leq k(t)  \gamma\left(A\right),
\end{equation*}
where we have used \ref{Hxcomp}. Finally, since $k(t)<1$, we obtain that $\gamma\left(A\right)=0$, which shows that $(\A(x_k(t)))_k$ is relatively compact in $\H$. Finally, $(x_k(t))_k$ is relatively compact in $\H$ by virtue of \ref{HA2}. 
}
\end{claimproof}

\begin{claim}{2}{ $\A(x(t))\in C(t,x(t))$ for all $t\in [0,\tau_R]$.}
\end{claim}
\begin{claimproof}{2}{As a result of Claim 1 and the weak convergence $x_k(t)\rightharpoonup x(t)$ for all $t\in [0,\tau_R]$ (due to \ref{comp-i} of Lemma \ref{compactness}), we obtain the strong convergence of $(x_k(t))_k$ to $x(t)$ for all $t\in [0,\tau_R]$.
Therefore, due to \ref{Hxlip}, 
\begin{equation*}
\begin{aligned}
  d_{C(t,x(t))}(\A(x(t)))&\leq \liminf_{k\to \infty}\left(d_{C(t,x_{k}(t))}\left(\A(x_{k}(t))\right)+(M+L)\Vert x_{k}(t)-x(t)\Vert \right)\\
  &\leq \liminf_{k\to \infty} \left(\frac{\tilde{\kappa} \lambda_{n_k}}{m\alpha^2-L}+(M+L)\Vert x_{k}(t)-x(t)\Vert\right)=0,
  \end{aligned}
\end{equation*}
which proves the claim.
}\end{claimproof}

Now, we prove that ${x}$ is a solution of (\ref{Sweeping-process}) over the interval $[0,\tau_R]$. Define
\begin{equation*}
\tilde{F}(t,x):=\operatorname{cl}{\operatorname{co}}\left(\frac{\tilde{\kappa}}{m\alpha^2-L}\partial d_{C(t,x)}(\A(x))\cup \{0\}\right) \textrm{ for } (t,x)\in [0,\tau_R]\times \H.
\end{equation*}
Then, for a.e. $t\in [0,\tau_R]$
\begin{equation*}
-\dot{x}_k(t)\in \frac{1}{2\lambda}\partial d_{C(t,x_{k}(t))}^2\left(\A(x_{k}(t))\right)\subset \tilde{F}(t,x_{k}(t)),
\end{equation*}
where we have used Proposition \ref{prop-cotas}.
\begin{claim}{3:}{ $\tilde{F}$ has closed and convex values and satisfies:
\begin{enumerate}[label=(\roman{*})]
\item\label{3.1} for each $x\in H$, $\tilde{F}(\cdot,x)$ is measurable;
\item\label{3.2} for all $t\in [0,\tau_R]$, $\tilde{F}(t,\cdot)$ is upper semicontinuous from $\H$ into $\H_w$;
\item\label{3.3} if $\A(x)\in C(t,x)$ then $\tilde{F}(t,x)=\frac{\tilde{\kappa}}{m\alpha^2-L}\partial d_{C(t,x)}(\A(x))$.
\end{enumerate}
}\end{claim}
\begin{claimproof}{3}{ To prove Claim 3, we follow the ideas from \cite[Theorem~6.1]{JV-Moreau}. Define $G(t,x):=\frac{\tilde{\kappa}}{m\alpha^2-L}\partial d_{C(t,x)}(\A(x))\cup\{0\}$. We note that $G(\cdot,x)$ is measurable as the union of two measurable set-valued maps (see \cite[Lemma~18.4]{Aliprantis}). Let us define $\Gamma(t):=\tilde{F}(t,x)$. Then, $\Gamma$ takes weakly compact convex values. Fixing any $d\in \H$, by virtue of \cite[Proposition~2.2.39]{PapaHandbook-1}, is enough to verify that the support function $t\mapsto \sigma(d,\Gamma(t)):=\sup\{\left\langle v,d\right\rangle\colon v\in \Gamma(t)\}$ is measurable. Thus,
\begin{equation*}
\sigma(d,\Gamma(t)):=\sup\{\left\langle v,d\right\rangle\colon v\in \Gamma(t)\}=\sup\{\left\langle v,d\right\rangle \colon v\in G(t,x)\},
\end{equation*}
is measurable because $G(\cdot,x)$ is measurable. Thus \ref{3.1} holds. Assertion \ref{3.2} follows directly from \cite[Theorem~17.27 and 17.3]{Aliprantis}. Finally, if $\A(x)\in C(t,x)$ then $0\in \partial d_{C(t,x)}(\A(x))$. Hence, using the fact that the subdifferential of a locally Lipschitz function is closed and convex,
\begin{equation*}
\tilde{F}(t,x)=\operatorname{cl}{\operatorname{co}}\left(\frac{\tilde{\kappa}}{m\alpha^2-L}\partial d_{C(t,x)}(\A(x))\right)=\frac{\tilde{\kappa}}{m\alpha^2-L}\partial d_{C(t,x)}(\A(x)),
\end{equation*}
which shows \ref{3.3}.
}\end{claimproof}

In summary, we have
\begin{enumerate}[label=(\roman{*})]
\item for each $x\in \H$, $\tilde{F}(\cdot,x)$ is measurable.
\item for all $t\in [0,\tau_R]$, $\tilde{F}(t,\cdot)$ is upper semicontinuous from $\H$ into $\H_w$.
\item $\dot{x}_k \rightharpoonup \dot{x}$ in $L^1\left([0,\tau_R];\H\right)$ as $k\to +\infty$.
\item ${x}_k(t) \to {x}(t)$ as $k\to +\infty$ for all $t\in [0,\tau_R]$.
\item $\A({x}_k(t)) \to \A({x}(t))$ as $k\to +\infty$ for all $t\in [0,\tau_R]$.
\item $-\dot{x}_k(t)\in \tilde{F}\left(t,x_{k}(t)\right)$ for a.e. $t\in [0,\tau_R]$.
\end{enumerate}
These conditions and the Convergence Theorem (see \cite[p.60]{Aubin-Cellina} for more details) imply that $x$  satisfies 
\begin{equation*}
\left\{
\begin{aligned}
-\dot{x}(t)&\in \tilde{F}(t,x(t)) & \textrm{ a.e. } t\in [0,\tau_R],\\
x(0)&=x_0\in C(0,x_0),
\end{aligned}
\right.
\end{equation*}
which, according to Claim 3, implies that $x$ is a solution of
\begin{equation*}
\left\{
\begin{aligned}
-\dot{x}(t)&\in \frac{\tilde{\kappa}}{m\alpha^2-L}\partial d_{C(t,x(t))}(\A(x(t))) & \textrm{ a.e. } t\in [0,\tau_R],\\
x(0)&=x_0\in C(0,x_0).
\end{aligned}
\right.
\end{equation*}
Therefore, by virtue of (\ref{cono-distance}) and Claim 2, $x$ is a solution of (\ref{Sweeping-process}) over $[0,\tau_R]$. Finally, to extend the solution over all the interval $I$, we can use \cite[Lemma~5.7]{JV-alpha}.
\qed
\end{proof}

\subsection{\textbf{The Case of the Degenerate Sweeping Process}} \
This subsection is devoted to the degenerate  sweeping process:
\begin{equation}\label{SP-normal}
\left\{
\begin{aligned}
-\dot{x}(t)&\in N\left(C(t);\A(x(t))\right) & \textrm{ a.e. } t\in I,\\
x(0)&=x_0\in C(0).
\end{aligned}
\right.
\end{equation}
This differential inclusion can be seen as a particular case of (\ref{Sweeping-process}) when the sets $C(t,x)$ are state independent. We show that Theorem \ref{Lipschitz-state} is valid under the weaker hypothesis \ref{Halpha} instead of \ref{Hxsubsmooth}. The following theorem improves the results from \cite{JV-alpha,JV-Galerkin2017} regarding the existence for sweeping process with nonregular sets and also complements the results from \cite{Adly2020,Haddad-Sene2019} regarding the existence of degenerate sweeping process with uniformly prox-regular sets. 
\begin{theorem}
Assume that \ref{HA1}, \ref{HA2}, \ref{Hlip}, \ref{Halpha} and \ref{Hcomp} hold. Then, there exists at least one solution $x\in \operatorname{AC}\left(I;\H\right)$ of (\ref{SP-normal}). 
\end{theorem}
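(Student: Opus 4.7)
\medskip
\noindent\textbf{Proof plan.} The plan is to mimic the argument of Theorem \ref{Lipschitz-state} in the simpler state-independent setting, replacing Proposition \ref{distance-prop-2} by Proposition \ref{distance-prop} and using \ref{Halpha} directly (so no appeal to subsmoothness or to Proposition \ref{alpha-far} is required). For each $\lambda>0$ I would first produce, via Theorem \ref{bothe-2} combined with Proposition \ref{distance-prop}, an absolutely continuous solution $x_\lambda$ of the regularized inclusion
\begin{equation*}
-\dot{x}_\lambda(t)\in \tfrac{1}{2\lambda}\partial d^2_{C(t)}(\A(x_\lambda(t))),\qquad x_\lambda(0)=x_0.
\end{equation*}
Since the moving set is state-independent, the Lipschitz term in the variable $x$ disappears, so effectively one has $L=0$ in every estimate.

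Next I would set $\varphi_\lambda(t):=d_{C(t)}(\A(x_\lambda(t)))$ and repeat the computations of Proposition \ref{prop-cotas} in this simpler setting. Fix $R>0$ and, as before, define $\tau_\lambda$ as the first exit time of $x_\lambda$ from $\mathbb{B}(x_0,R)$ (set $\tau_\lambda=T$ if there is none); let $\tilde{\kappa}:=\kappa_{\Vert\A(x_0)\Vert+MR}$ be the constant furnished by \ref{Hlip}. Using Lemma \ref{derivadas}(iii), \ref{HA1}, the identity $\partial d_S^2(x)=2d_S(x)\partial d_S(x)$, and applying \ref{Halpha} directly (no need to deduce the $\alpha$-far bound from subsmoothness), I would obtain the differential inequality
\begin{equation*}
\dot{\varphi}_\lambda(t)\leq \tilde{\kappa}-\frac{m\alpha^2}{\lambda}\varphi_\lambda(t)\qquad \text{a.e. on } [0,\tau_\lambda]
\end{equation*}
on the set where $\varphi_\lambda\in\,]0,\rho[$, and an analogous estimate at points where $\varphi_\lambda$ vanishes. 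Gr\"onwall's inequality then yields $\varphi_\lambda(t)\leq \tilde{\kappa}\lambda/(m\alpha^2)<\rho$ for $\lambda$ small enough, so $\varphi_\lambda$ stays in $[0,\rho[$, and the bound $\Vert\dot{x}_\lambda(t)\Vert\leq \tilde{\kappa}/(m\alpha^2)$ follows. The same Lipschitz-versus-length argument as in Proposition \ref{prop-cotas} produces a time $\tau_R\in\,]0,T]$, independent of $\lambda$, with $\tau_R\leq\tau_\lambda$.

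With a sequence $\lambda_n\downarrow 0$ in hand, I would invoke Lemma \ref{compactness} to extract a subsequence (still denoted $x_k$) such that $x_k(t)\rightharpoonup x(t)$ for every $t\in[0,\tau_R]$ and $\dot{x}_k\rightharpoonup\dot{x}$ in $L^1$. The key pointwise compactness (the analogue of Claim 1) is now nearly immediate: picking $y_k(t)\in\operatorname{Proj}_{C(t)}(\A(x_k(t)))$ one checks that $\Vert y_k(t)\Vert$ is bounded by a $t$-dependent constant, and \ref{Hcomp} (ball-compactness of $C(t)$) implies that $\{y_k(t)\}$ is relatively compact in $\H$; since $\Vert\A(x_k(t))-y_k(t)\Vert=\varphi_k(t)\to 0$, the sequence $(\A(x_k(t)))_k$ is relatively compact, and by \ref{HA1} so is $(x_k(t))_k$. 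Consequently $x_k(t)\to x(t)$ strongly, $\A(x_k(t))\to \A(x(t))$ by \ref{HA2}, and passing to the limit in $\varphi_k(t)\to 0$ with \ref{Hlip} gives $\A(x(t))\in C(t)$.

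Finally, I would close the inclusion by introducing
\begin{equation*}
\tilde{F}(t,x):=\operatorname{cl}\operatorname{co}\Bigl(\tfrac{\tilde{\kappa}}{m\alpha^2}\,\partial d_{C(t)}(\A(x))\cup\{0\}\Bigr),
\end{equation*}
verifying (exactly as in Claim 3 of Theorem \ref{Lipschitz-state}) that $\tilde{F}$ is measurable in $t$, upper semicontinuous from $\H$ into $\H_w$ in $x$, and coincides with $\tfrac{\tilde{\kappa}}{m\alpha^2}\partial d_{C(t)}(\A(x))$ whenever $\A(x)\in C(t)$. Since $-\dot{x}_k(t)\in\tilde{F}(t,x_k(t))$, the Aubin--Cellina Convergence Theorem yields $-\dot{x}(t)\in\tilde{F}(t,x(t))$, hence
\begin{equation*}
-\dot{x}(t)\in\tfrac{\tilde{\kappa}}{m\alpha^2}\,\partial d_{C(t)}(\A(x(t)))\subset N(C(t);\A(x(t)))
\end{equation*}
by (\ref{cono-distance}). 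This produces a solution on $[0,\tau_R]$, and the extension argument of \cite[Lemma~5.7]{JV-alpha} propagates it to all of $I$. The main obstacle I expect is the careful verification of the differential inequality for $\varphi_\lambda$ at points where $\varphi_\lambda=0$ (where Lemma \ref{derivadas}(iii) gives only a one-sided bound and one must exploit $\Vert\dot{x}_\lambda(t)\Vert\leq\varphi_\lambda(t)/\lambda$), but this is exactly the computation carried out in Proposition \ref{prop-cotas} and transfers to the present setting without change.
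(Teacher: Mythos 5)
Your proposal is correct and follows essentially the same route as the paper, whose proof simply instructs the reader to adapt the argument of Theorem \ref{Lipschitz-state}, noting that \ref{Halpha} replaces the role of subsmoothness and that Proposition \ref{distance-prop} supplies the needed upper semicontinuity; your write-up is a faithful, more detailed execution of exactly that adaptation, with \ref{Hcomp} (ball-compactness) correctly substituting for the measure-of-noncompactness argument in Claim 1. The only remark worth making is that your appeal to \ref{HA1} (rather than \ref{HA2}) to pass from relative compactness of $(\A(x_k(t)))_k$ to that of $(x_k(t))_k$ is the right one, since strong monotonicity gives $\Vert \A(x)-\A(y)\Vert \geq m\Vert x-y\Vert$.
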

\begin{proof}
  According to the proof of Theorem \ref{Lipschitz-state}, we observe that \ref{Hxsubsmooth} was used to obtain \ref{Hxalpha} and the upper semicontinuity of $\partial d_{C(t,\cdot)}(\cdot)$ from $\H$ into $\H_w$ for all $t\in I$. Since in the present case these two properties hold under \ref{Halpha} (see Proposition \ref{distance-prop}), it is sufficient to adapt the proof of Theorem \ref{Lipschitz-state} to get the result.
\qed
\end{proof}

\section{Conclusions}

In this paper, we have introduced and studied the degenerate state-dependent sweeping process. By a technical modification of the arguments given in \cite{JV-Moreau}, we proved the existence of solutions for Lipschitz moving sets with respect to the truncated Hausdorff distance. We generalize existing results in the literature for both the state-dependent sweeping process and the degenerate sweeping process.  Regarding the degenerate state-dependent sweeping processes, there remain many questions that require further investigations. For example, it would be interesting to study the case of bounded variation degenerate state-dependent sweeping process. In this regard, the work of Recupero \cite{Recupero-2015,Recupero-2015-JCA,Recupero2020} seems promising to extend the present result to the framework of differential measures.  Another unexplored research topic is the optimal control of degenerate state-dependent sweeping processes, which is of particular interest to practitioners. 
We hope to address these challenges in the future.

\begin{acknowledgements}
Diana Narv\'aez was supported by ANID-Chile under grants CMM-AFB170001, Fondecyt Regular N$^{\circ}$ 1171854, and Fondecyt Regular N$^{\circ}$ 1190012 Etapa 2020. 
Emilio Vilches was partially supported by ANID-Chile under grants Fondecyt de Iniciaci\'on N$^{\circ}$ 11180098 and Fondecyt Regular N$^{\circ}$ 1200283.
\end{acknowledgements}

\bibliographystyle{unsrt}
\bibliography{bib}

\begin{thebibliography}{10}

\bibitem{MO1}
J.J. Moreau.
\newblock Rafle par un convexe variable. {I}.
\newblock In {\em Travaux du {S}\'eminaire d'{A}nalyse {C}onvexe, {V}ol. {I},
  {E}xp. {N}o. 15}, pages 1--43. U.\'E.R. de Math., Univ. Sci. Tech. Languedoc,
  Montpellier, 1971.

\bibitem{MO2}
J.J. Moreau.
\newblock Rafle par un convexe variable. {II}.
\newblock In {\em Travaux du {S}\'eminaire d'{A}nalyse {C}onvexe, {V}ol. {II},
  {E}xp. {N}o. 3}, pages 1--36. U.\'E.R. de Math., Univ. Sci. Tech. Languedoc,
  Montpellier, 1972.

\bibitem{MO4}
J.J. Moreau.
\newblock Evolution problem associated with a moving convex set in a {H}ilbert
  space.
\newblock {\em J. Differential Equations}, 26(3):347--374, 1977.

\bibitem{Moreau1999}
J.J. Moreau.
\newblock Numerical aspects of the sweeping process.
\newblock {\em Comput. Methods Appl. Mech. Engrg.}, 177(3-4):329--349, 1999.

\bibitem{Acary-Bon-Bro-2011}
V.~Acary, O.~Bonnefon, and B.~Brogliato.
\newblock {\em Nonsmooth modeling and simulation for switched circuits}.
\newblock Springer, 2011.

\bibitem{Brogliato-M}
B.~Brogliato.
\newblock {\em Nonsmooth mechanics}.
\newblock Communications and Control Engineering Series. Springer, [Cham],
  third edition, 2016.
\newblock Models, dynamics and control.

\bibitem{Maury-Venel}
B.~Maury and J.~Venel.
\newblock Un mod\'{e}le de mouvement de foule.
\newblock {\em ESAIM Proc.}, 18:143--152, 2007.

\bibitem{Krejci-1997}
P.~Krej\u{c}i.
\newblock {\em Hysteresis, Convexity and Dissipation in Hyperbolic Equations},
  volume~8 of {\em GAKUTO Internat. Ser. Math. Sci. Appl.}
\newblock Gakk\=otosho Co., Ltd., Tokyo, 1996.

\bibitem{Gidoni1998}
P.~Gidoni.
\newblock Rate-independent soft crawlers.
\newblock {\em Quart. J. Mech. Appl. Math.}, 71(4):369--409, 2018.

\bibitem{JV-Moreau}
A.~Jourani and E.~Vilches.
\newblock Moreau-{Y}osida regularization of state-dependent sweeping processes
  with nonregular sets.
\newblock {\em J. Optim. Theory Appl.}, 173(1):91--116, 2017.

\bibitem{Adly2016}
S.~Adly and B.K. Le.
\newblock Unbounded second-order state-dependent {M}oreau's sweeping processes
  in {H}ilbert spaces.
\newblock {\em J. Optim. Theory Appl.}, 169(2):407--423, 2016.

\bibitem{MM2019}
F.~Aliouane, D.~Azzam-Laouir, C.~Castaing, and M.~D.~P. Monteiro-Marques.
\newblock Second-order time and state-dependent sweeping process in {H}ilbert
  space.
\newblock {\em J. Optim. Theory Appl.}, 182(1):153--188, 2019.

\bibitem{JV-Implicit}
A.~Jourani and E.~Vilches.
\newblock A differential equation approach to implicit sweeping processes.
\newblock {\em J. Differential Equations}, 266(8):5168--5184, 2019.

\bibitem{JZ-2020}
S.~Zeng and E.~Vilches.
\newblock Well-posedness of history/state-dependent implicit sweeping
  processes.
\newblock {\em J. Optim. Theory Appl.}, 186(3):960--984, 2020.

\bibitem{JV-alpha}
A.~Jourani and E.~Vilches.
\newblock Positively $\alpha$-far sets and existence results for generalized
  perturbed sweeping processes.
\newblock {\em J. Convex Anal.}, 23(3), 2016.

\bibitem{JV-Galerkin2017}
A.~Jourani and E.~Vilches.
\newblock Galerkin-like method and generalized perturbed sweeping process with
  nonregular sets.
\newblock {\em SIAM J. Control Optim.}, 55(4):2412--2436, 2017.

\bibitem{Kunze-MM1997}
M.~Kunze and Manuel D.~P. Monteiro~Marques.
\newblock Existence of solutions for degenerate sweeping processes.
\newblock {\em J. Convex Anal.}, 4(1):165--176, 1997.

\bibitem{Brogliato-Tanwani}
B.~Brogliato and A.~Tanwani.
\newblock Dynamical systems coupled with monotone set-valued operators:
  formalisms, applications, well-posedness, and stability.
\newblock {\em SIAM Rev.}, 62(1):3--129, 2020.

\bibitem{Kunze-MM2000}
M.~Kunze and M.D.P. Monteiro-Marques.
\newblock An introduction to {M}oreau's sweeping process.
\newblock In {\em Impacts in mechanical systems ({G}renoble, 1999)}, volume 551
  of {\em Lecture Notes in Phys.}, pages 1--60. Springer, Berlin, 2000.

\bibitem{Kunze1998}
M.~Kunze and M.~D.~P. Monteiro~Marques.
\newblock On the discretization of degenerate sweeping processes.
\newblock {\em Portugal. Math.}, 55(2):219--232, 1998.

\bibitem{Kunze1999}
M.~Kunze and M.~D.~P. Monteiro~Marques.
\newblock Degenerate sweeping processes.
\newblock In {\em Variations of domain and free-boundary problems in solid
  mechanics ({P}aris, 1997)}, volume~66 of {\em Solid Mech. Appl.}, pages
  301--307. Kluwer Acad. Publ., Dordrecht, 1999.

\bibitem{Adly2020}
S.~Adly and T.~Haddad.
\newblock Well-posedness of nonconvex degenerate sweeping process via
  unconstrained evolution problems.
\newblock {\em Nonlinear Anal. Hybrid Syst.}, 36:100832, 13, 2020.

\bibitem{Haddad-Sene2019}
M.~Kecies, T.~Haddad, and M.~Sene.
\newblock Degenerate sweeping process with a lipschitz perturbation.
\newblock {\em Applicable Analysis}, pages 1--23, 2019.

\bibitem{Thibault2016}
L.~Thibault.
\newblock Moreau sweeping process with bounded truncated retraction.
\newblock {\em J. Convex Anal.}, 23(4):1051--1098, 2016.

\bibitem{NacryT2019}
F.~Nacry and L.~Thibault.
\newblock Regularization of sweeping process: old and new.
\newblock {\em Pure Appl. Funct. Anal.}, 4(1):59--117, 2019.

\bibitem{Marques87}
M.D.P. Monteiro-Marques.
\newblock Regularization and graph approximation of a discontinuous evolution
  problem.
\newblock {\em J. Differential Equations}, 67(2):145--164, 1987.

\bibitem{Marques1993}
M.D.P. Monteiro-Marques.
\newblock {\em Differential inclusions in nonsmooth mechanical problems},
  volume~9 of {\em Progr. Nonlinear Differential Equations Appl.}
\newblock Birkh\"{a}user Verlag, Basel, 1993.

\bibitem{Kunze1996}
M.~Kunze and M.D.P. Monteiro-Marques.
\newblock {Y}osida-{M}oreau regularization of sweeping processes with unbounded
  variation.
\newblock {\em J. Differential Equations}, 130(2):292--306, 1996.

\bibitem{Thibault2008}
L.~Thibault.
\newblock Regularization of nonconvex sweeping process in {H}ilbert space.
\newblock {\em Set-Valued Anal.}, 16(2-3):319--333, 2008.

\bibitem{Mazade2013}
M.~Mazade and L.~Thibault.
\newblock Regularization of differential variational inequalities with locally
  prox-regular sets.
\newblock {\em Math. Program.}, 139(1-2, Ser. B):243--269, 2013.

\bibitem{Sene2014}
M.~Sene and L.~Thibault.
\newblock Regularization of dynamical systems associated with prox-regular
  moving sets.
\newblock {\em J. Nonlinear Convex Anal.}, 15(4):647--663, 2014.

\bibitem{Clarke1998}
F.~Clarke, Y.~Ledyaev, R.~Stern, and P.~Wolenski.
\newblock {\em Nonsmooth analysis and control theory}, volume 178 of {\em Grad.
  Texts in Math.}
\newblock Springer-Verlag, New York, 1998.

\bibitem{Haddad2009}
T.~Haddad, A.~Jourani, and L.~Thibault.
\newblock Reduction of sweeping process to unconstrained differential
  inclusion.
\newblock {\em Pac. J. Optim.}, 4(3):493--512, 2008.

\bibitem{Poliquin2000}
R.A. Poliquin, R.T. Rockafellar, and L.~Thibault.
\newblock Local differentiability of distance functions.
\newblock {\em Trans. Amer. Math. Soc.}, 352(11):5231--5249, 2000.

\bibitem{Deimling1992}
K.~Deimling.
\newblock {\em Multivalued Differential Equations}, volume~1 of {\em de Gruyter
  Ser. Nonlinear Anal. Appl.}
\newblock Walter de Gruyter \& Co., Berlin, 1992.

\bibitem{Bothe1998}
D.~Bothe.
\newblock Multivalued perturbations of {$m$}-accretive differential inclusions.
\newblock {\em Israel J. Math.}, 108:109--138, 1998.

\bibitem{Tolstonogov2014}
A.~A. Tolstonogov.
\newblock Sweeping process with unbounded nonconvex perturbation.
\newblock {\em Nonlinear Anal.}, 108:291--301, 2014.

\bibitem{Deutsch2001}
F.~Deutsch.
\newblock {\em Best approximation in inner product spaces}, volume~7 of {\em
  CMS Books Math./Ouvrages Math. SMC}.
\newblock Springer-Verlag, New York, 2001.

\bibitem{BFG1987}
J.~Borwein, S.~Fitzpatrick, and J.~Giles.
\newblock The differentiability of real functions on normed linear space using
  generalized subgradients.
\newblock {\em J. Math. Anal. Appl.}, 128(2):512--534, 1987.

\bibitem{Aliprantis}
C.~Aliprantis and K.~Border.
\newblock {\em Infinite dimensional analysis}.
\newblock Springer, 3rd edition, 2006.

\bibitem{PapaHandbook-1}
S.~Hu and N.~Papageorgiou.
\newblock {\em Handbook of multivalued analysis. {V}ol. {I}}, volume 419 of
  {\em Math. Appl.}
\newblock Kluwer Academic Publishers, Dordrecht, 1997.
\newblock Theory.

\bibitem{Aubin-Cellina}
J.P. Aubin and A.~Cellina.
\newblock {\em Differential Inclusions}, volume 264 of {\em Grundlehren Math.
  Wiss.}
\newblock Springer-Verlag, 1984.

\bibitem{Recupero-2015}
V.~Recupero.
\newblock {$BV$} continuous sweeping processes.
\newblock {\em J. Differential Equations}, 259(8):4253--4272, 2015.

\bibitem{Recupero-2015-JCA}
V.~Recupero.
\newblock Sweeping processes and rate independence.
\newblock {\em J. Convex Analysis}, 23(4), 2016.

\bibitem{Recupero2020}
V.~Recupero.
\newblock Convex valued geodesics and applications to sweeping processes with
  bounded retraction.
\newblock {\em J. Convex Anal.}, 27(2):537--558, 2020.

\end{thebibliography}

\end{document}